\theoremstyle{plain}
\newtheorem{theorem}{Theorem}
\newtheorem{proposition}[theorem]{Proposition}
\newtheorem{corollary}[theorem]{Corollary}
\theoremstyle{definition}
\newtheorem{definition}[theorem]{Definition}
\newtheorem{definitions}[theorem]{Definitions}
\newtheorem{remark}[theorem]{Remark}
\newtheorem{remarks}[theorem]{Remarks}
\newtheorem{example}[theorem]{Example}
\newtheorem{notation}[theorem]{Notation}
\newcommand{\normgen}[1]{{\lhd}{#1}{\rhd}}
\newcommand{\ZZ}{{\mathbb Z}}
\newcommand{\GG}{{\mathfrak G}}
\newcommand{\RR}{{\mathcal R}}
\newcommand{\WW}{{\mathfrak W}}
\renewcommand{\leq}{\leqslant}
\newcommand{\Gmma}{\mathit{\Gamma}}
\renewcommand{\geq}{\geqslant}
\renewcommand{\epsilon}{\varepsilon}
\newcommand{\cprime}{$'$}
\newcommand{\nasmof}{nasmof}
\def\d1{\discretionary{-}{}{-}}
\begin{document}

\title{Left relatively convex subgroups}

\author{Yago Antol\'{\i}n}
\address{Department of Mathematics, Vanderbilt University, 1326 Stevenson Center, Nashville, TN 37240, USA}
\email{yago.anpi@gmail.com}
\thanks{The first- and second-named authors' research was partially supported by Spain's Ministerio de Ciencia e
Innovaci\'on through Project MTM2011-25955}

\author{Warren Dicks}
\address{Departament de Matem{\`a}tiques, Universitat Aut{\`o}noma de Barcelona, E-08193 Bellaterra (Barcelona), SPAIN}
\email{dicks@mat.uab.cat}

\author{Zoran {\v{S}}uni{\'c}}
\address{Department of Mathematics, Texas A\&M University, College Station, TX 77843-3368, USA}
\email{sunic@math.tamu.edu}
\thanks{The third-named author's research was partially supported by the National Science Foundation under Grant
No.~DMS-1105520}

\keywords{left relatively convex subgroup, left-orderable group, group actions on trees,  free
products with amalgamation, Burns-Hale theorem, free group,  right-angled Artin group, surface
group}
\subjclass[2010]{06F15,20F60,20E08,20E06, 20E05, 52A99, 20M99, 20F36}

\begin{abstract}
Let $G$ be a group and $H$ be a subgroup of $G$.
We say that  $H$  is left relatively convex in
$G$ if the left $G$-set $G/H$ has at least one $G$-invariant order; when $G$ is left orderable,
this holds if and only if $H$ is convex in $G$ under some left  ordering of $G$.

We give a criterion for $H$ to be left relatively convex in $G$ that generalizes a famous theorem of
Burns and Hale and has essentially the same proof. We show that all maximal cyclic subgroups are left relatively convex
in free groups, in right-angled Artin groups, and in surface groups that are not
the Klein-bottle group. The free-group case
 extends a result of Duncan and Howie.

We show that if $G$ is left orderable, then each free factor of $G$
 is left relatively convex in~$G$.  More generally, for any graph of groups,
 if  each edge group is left relatively convex
in each of its vertex groups, then each
vertex group  is left relatively convex in the fundamental group;
this generalizes a   result of Chiswell.

We show that all maximal cyclic subgroups in locally residually torsion-free nilpotent groups are
left relatively convex.
\end{abstract}

\maketitle

\section{Outline}\label{sec:outline}

\begin{notation}
Throughout this article, let $G$ be a multiplicative group, and  $G_0$ be a subgroup  of $G$. For
$x$, $y \in G$, $[x,y]:= x^{-1}y^{-1}xy$, \,\, $x^y:= y^{-1}xy$,\,\, and\,\, \mbox{$\null^yx :=
yxy^{-1}$}. For any subset $X$ of~$G$, \mbox{$X^{\pm 1}:= X \cup X^{-1}$},   $\langle X \rangle$
denotes the subgroup of $G$ generated by $X$,  $\langle X^G \rangle$ denotes the normal subgroup of
$G$ generated by $X$, and \mbox{$G/\normgen{X}:= G/\langle X^G \rangle$}. When we write $A
\subseteq B$ we mean that $A$ is a subset of $B$, and when we write $A \subset B$ we mean that $A$
is a proper subset of $B$.
\end{notation}

In Section~\ref{sec:relconv}, we collect together some facts, several of which first arose in  the
proof of  Theorem 28 of~\cite{bergman:ordered}.  If $G$ is left orderable, Bergman calls $G_0$
`left relatively convex in~$G$' if $G_0$ is convex in~$G$ under some left ordering of $G$, or,
equivalently,  the left $G$-set $G/G_0$ has some $G$-invariant order. Broadening the scope of his
terminology, we shall say that   $G_0$   is \textit{left relatively convex in}~$G$ if the left
$G$-set $G/G_0$ has some   $G$-invariant order, even if $G$ is not left orderable.

We give a criterion for $G_0$ to be left relatively convex in $G$
that generalizes  a famous theorem of Burns and Hale~\cite{BH72}
and has essentially the same proof.  We deduce  that if each noncyclic,  finitely
generated subgroup of $G$  maps onto $\ZZ^2$,  then each
 maximal  cyclic subgroup of $G$ is
left relatively convex in $G$.  Thus, if $F$ is a  free group and $C$ is a
 maximal  cyclic subgroup of~$F$, then
$F/C$ has an  $F$-invariant order;  this extends the result of Duncan and Howie~\cite{DH91}
that a certain finite subset of $F/C$  has an order that is respected by the partial $F$-action.
Louder and Wilton~\cite{LW14} used the Duncan-Howie order to prove Wise's
conjecture that, for subgroups $H$ and $K$ of a free group $F$, if $H$ or $K$ is
a maximal cyclic subgroup of $F$, then \mbox{$\sum_{HxK \in H\backslash\,F/K}
 \operatorname{rank}(H^x \cap   K) \leq \operatorname{rank}(H)\operatorname{rank}(K)$}.
They also gave a simple proof of the existence of a Duncan-Howie order;
translating their argument from topological to algebraic  language  led us to the order on $F/C$.

In Section~\ref{sec:graphs}, we find that the main result of~\cite{DS14}
 implies that, for any graph of groups, if  each edge group is left relatively convex
in each of its vertex groups, then each
vertex group   is left relatively convex in the fundamental group.
This generalizes a result of
Chiswell~\cite{chiswell:ordered}.
In particular, in a left-orderable group, each free factor is left relatively convex.

One says that $G$ is   \textit{discretely left orderable} if some
infinite  (maximal) cyclic subgroup of~$G$ is left relatively convex in $G$.
Many examples of such groups are given in \cite{LRR09};  for instance, it is seen that
among  free groups, braid  groups,  surface
groups,  and  right\d1angled Artin groups, all the infinite ones are
 discretely left orderable. In Section~\ref{sec:examples} below, we show that all maximal  cyclic subgroups are
left relatively convex in right\d1angled Artin groups
and in  surface groups that are not the Klein-bottle group.

At the end, in Section~\ref{sec:rtfn}, we show that all maximal cyclic subgroups in locally
residually torsion-free nilpotent groups are left relatively convex.

\section{Left relatively convex subgroups}\label{sec:relconv}

\begin{definitions}\label{defs:order}
Let $X$ be a set and $\RR $ be a binary relation on  $X$; thus, $\RR $ is a subset of \mbox{$X \times X$}, and
 \mbox{`$x\RR y$'} means \mbox{`$(x,y) \in \RR $'}.  We say that  $\RR $ is \textit{transitive}
when, for all \mbox{$x$, $y$, $z \in X$}, if   \mbox{$x \RR y$}   and
 \mbox{$y \RR  z$}, then  \mbox{$x \RR  z$}, and here
we write  \mbox{$x \RR  y\RR  z$} and say that $y$ \textit{fits between $x$ and $z$ with respect to~$\RR $}.
We say that  $\RR $ is \textit{trichotomous} when, for
all $x$, $y \in X,$    exactly one of  \mbox{$x \RR y$}, \mbox{$x=y$}, and  \mbox{$y \RR x$}  holds, and here
we say that the \textit{sign} of the triple  \mbox{$(x, \RR ,y)$}, denoted
 \mbox{$\operatorname{sign}(x, \RR ,y)$},
is 1, 0, or $-1$, respectively.
A transitive,  trichotomous  binary relation is called an \textit{order}.
 For any order $<$ on $X$, a subset $Y$ of $X$ is said to be \textit{convex in $X$  with respect to~$<$}
 if no element of  \mbox{$X{-}Y$} fits between two elements of $Y$ with respect to $<$.

Now suppose that $X$ is a left $G$-set. The diagonal left $G$-action on \mbox{$X \times X$} gives
a left $G$-action on the set of binary relations on $X$.
By a \textit{binary $G$-relation on~$X$} we mean a $G$-invariant binary relation on $X$,
and  by  a   \textit{$G$-order on~$X$} we mean a $G$-invariant order on~$X$.
 If there exists at least one  $G$-order on $X$,
we say that $X$ is \textit{$G$-orderable}.
If $X$ is endowed with a  $G$-order,
 we say that $X$ is \textit{$G$-ordered}.
When $X$ is $G$  with the left multiplication action, we replace `$G$-' with `left', and write
\textit{left order}, \textit{left orderable}, or  \textit{left ordered}, the latter two being hyphenated when they
premodify a noun.

Analogous terminology applies for right $G$-sets.
\end{definitions}

\begin{definitions}\label{defs:chain}  For $K \leq H \leq G$, we  recall two
mutually inverse operations.  Let \mbox{$x$, $y \in G$}.

If $<$  is a    $G$-order on $G/K$ with respect to which $H/K$ is convex in~$G/K$,
then we define an  $H$-order $<_{\operatorname{bottom}}$ on  $H/K$ and a
$G$-order  $<_{\operatorname{top}}$  on  $G/H$ as follows.  We take $<_{\operatorname{bottom}}$ to be
 the restriction of $<$ to $H/K$.  We define
\mbox{$xH <_{\operatorname{top}} yH$}  to mean
 \mbox{$(\forall h_1,\,h_2 \in H)(xh_1K < yh_2K)$}.
This relation is trichotomous since \mbox{$xH <_{\operatorname{top}} yH$} if and only if
\mbox{$(xH \ne yH) \wedge (xK < yK)$}; the former clearly implies the latter, and, when the latter holds,
 \mbox{$K  < x^{-1}yK$},  and then, by the convexity of $H/K$ in~$G/K$,
 \mbox{$h_1K < x^{-1}yK$},
and then  \mbox{$y^{-1}xh_1K < K$},\, \mbox{$y^{-1}xh_1K < h_2K$},\,
and \mbox{$xh_1K <  yh_2K$}.
Thus, $<_{\operatorname{top}}$ is a   $G$-order on $G/H$.

Conversely, if
$<_{\operatorname{bottom}}$ is an  $H$-order on $H/K$ and
$<_{\operatorname{top}}$ is a $G$-order on $G/H$, we now define
a $G$-order $<$ on $G/K$ with respect to which $H/K$ is convex in $G/K$.
We define \mbox{$xK < yK$} to mean \vspace{-3mm} \newline\centerline{$(xH <_{\operatorname{top}}  yH) \,\, \vee\,\,
\bigl((xH = yH) \wedge (K <_{\operatorname{bottom}} x^{-1}yK)\bigr).$}\vspace{1mm}\newline   It is clear that $<$ is a
well-defined   $G$-order on $G/K$.  Suppose \mbox{$xK \in  (G/K)-(H/K)$}. Then
\mbox{$xH \ne H$}. If
\mbox{$xH <_{\operatorname{top}}  H$}, then \mbox{$xK < hK$}, for all
\mbox{$h \in H$}, and similarly if  \mbox{$H <_{\operatorname{top}}  xH$}.  Thus,
$H/K$ is convex in $G/K$   with respect to $<$.

In particular, $G/K$ has some $G$-order with respect to which $H/K$ is  convex in~$G/K$
if and only if $H/K$ is $H$-orderable and $G/H$ is $G$-orderable.
Taking $K = \{1\}$ and $H=G_0$, we find that the following are equivalent,
as seen in the proof of Theorem~28 (vii)$\Leftrightarrow$(viii) of \cite{bergman:ordered}.
\begin{enumerate}[\normalfont(\ref{defs:chain}.1)]
\item\label{a2} $G$ has some left order with respect to which $G_0$ is  convex in $G$.
\item\label{c2} $G_0$ is left orderable, and $G/G_0$ is $G$-orderable.
\item\label{b2} $G\mkern7mu$ is left orderable, and $G/G_0$ is $G$-orderable.
\end{enumerate}
\end{definitions}

 This  motivates  the  terminology introduced in the following definition, which presents an analysis similar to
one given by Bergman in the proof of  Theorem~28  in \cite{bergman:ordered}. Unlike Bergman, we do not require that the group $G$ is left-ordered.

\begin{definition}  \label{def:relconvex}
Let $\operatorname{Ssg}(G)$ denote the set of all the subsemigroups of $G$, that is, subsets of~$G$
closed under the multiplication. We say that the subgroup \textit{$G_0$ of $G$ is  left relatively
convex in} $G$ when any of the following equivalent conditions  hold.
\begin{enumerate}[\normalfont(\ref{def:relconvex}.1)]
\item\label{a1} The left $G$-set $G/G_0$ is  $G$-orderable.
\item\label{b1} The right $G$-set $G_0\backslash G$ is  $G$-orderable.
\item\label{c1} There exists some \mbox{$G_+ \in \operatorname{Ssg}(G)$} such that
    \mbox{$G_+^{\pm1} = G{-}G_0$}; in this event, \mbox{$G_+ \cap G_+^{-1} = \emptyset$}  and
    \mbox{$G_0G_+ = G_+G_0 = G_0G_+G_0 = G_+$}.
\item\label{d1} For each finite subset $X$
of \mbox{$G{-}G_0$}, there exists
\mbox{$S \hskip-2pt\in\hskip-2pt  \operatorname{Ssg}(G)$} such that \mbox{$ X \hskip-2pt \subseteq \hskip-2pt  S^{\pm 1}
\hskip-2pt  \subseteq  \hskip-2pt G {-} G_0 $}.
\end{enumerate}
We then say also   that $G_0$
\textit{is a left relatively convex subgroup of}~$G$.  One may also use `right'  in place of `left'.
\end{definition}

\begin{proof}[Proof of   equivalence] (\ref{def:relconvex}.\ref{a1})$\Rightarrow$(\ref{def:relconvex}.\ref{c1}).
 Let $<$ be a $G$-order on $G/G_0$, and set
$$ G_+:= \{x \in G \mid G_0 < xG_0\}; $$  then $G_+^{-1} = \{x \in G \mid G_0 < x^{-1}G_0\}
= \{x \in G \mid xG_0 <  G_0\}$ and $G_0 = \{x \in G \mid G_0 = xG_0\}.$  Hence,  $G_+^{\pm1} = G{-}G_0$.
If $x$, $y \in G_+$, then
$G_0 < xG_0$, $G_0 < yG_0$ and $G_0 < xG_0 < xyG_0$; thus $xy \in G_+$.  Hence,  $G_+ \in \operatorname{Ssg}(G)$.

 Now consider any $G_+ \in \operatorname{Ssg}(G)$ such that
\mbox{$G_+^{\pm1} = G{-}G_0$}.  Then $G_+ \cap G_+^{-1} = \emptyset$, since $G_+$ is a subsemigroup which does
not contain $1$.  Also, $G_0G_+ \cap G_0 = \emptyset$,  since $G_+ \cap G_0^{-1}G_0 = \emptyset$,
while  \mbox{$G_0G_+ \cap G_+^{-1} = \emptyset$},  since $G_0 \cap G_+^{-1} G_+^{-1} = \emptyset$.
Thus $G_0G_+ \subseteq G_+$, and equality must hold.  Similarly, $G_+G_0 = G_+$.

(\ref{def:relconvex}.\ref{c1})$\Rightarrow$(\ref{def:relconvex}.\ref{a1}).   Let
 \mbox{$x$, $y$, $z \in G$}.  We define  $xG_0 < yG_0$
to mean \mbox{$(xG_0)^{-1}(yG_0) \subseteq G_+$}, or, equivalently, $x^{-1}y \in G_+$.
Then $<$ is a well-defined binary $G$-relation on $G/G_0$.
 Since $x^{-1} y$
belongs to exactly one of $G_+$, $G_0$, and $G_+^{-1}$, we see that $<$ is trichotomous.  If
 \mbox{$xG_0 <  yG_0$} and \mbox{$yG_0 <  zG_0$}, then $G_+$ contains $x^{-1}y$, $y^{-1}z$,
and their product,  which shows that
\mbox{$xG_0 < zG_0$}.   Thus $<$ is a $G$-order on $G/G_0$.

(\ref{def:relconvex}.\ref{b1})$\Leftrightarrow$(\ref{def:relconvex}.\ref{c1})  is the
left-right dual of~(\ref{def:relconvex}.\ref{a1})$\Leftrightarrow$(\ref{def:relconvex}.\ref{c1}).

(\ref{def:relconvex}.\ref{c1})$\Rightarrow$(\ref{def:relconvex}.\ref{d1}) with $S = G_+$.

(\ref{def:relconvex}.\ref{d1})$\Rightarrow$(\ref{def:relconvex}.\ref{c1}).~Berg\-man~\cite{bergman:ordered}
observes that an  implication of this type follows easily from
the Com\-pact\-ness Theorem of Model Theory; here, one could equally well use
the  quasi\d1com\-pact\-ness of  $\{-1,1\}^{G-G_0}$, which holds by
 a famous theorem of Tychon\-off~\cite{Tych30}.
The case of this implication where \mbox{$G_0 = \{1\}$}  was first stated by Conrad~\cite{Con59}, who
gave a short argument designed to be read in conjunction with a short argument of Ohnishi~\cite{Ohn52}.
Let us show that a streamlined form of the Conrad-Ohnishi proof gives the general case comparatively easily.

Let \mbox{$2^{\mkern1mu G-G_0}$}  denote  the set of all subsets of~\mbox{$G{-}G_0$}.
  For  each \mbox{$W \in 2^{\mkern1mu G-G_0}$},
let  \mbox{$\operatorname{Fin}(W)$}  denote the set of finite subsets of~$W$, and
\mbox{$\langle\langle W \rangle \rangle$}
denote the subsemigroup of $G$ generated by $W$. For each \mbox{$\varphi \in \{-1,1\}^{G-G_0}$} and \mbox{$x \in G{-}G_0$},
set \mbox{$\tilde\varphi(x):= x^{\varphi(x)} \in \{x,x^{-1}\}$}.  Set\vspace{-2.5mm}
\begin{align*}
 \WW: = \biggl\{W \mkern-3mu \in 2^{G-G_0}  \mid
 \bigl( \forall\, W' & \mkern-3mu \in  \operatorname{Fin}(W)\bigr) \,\,
\bigl( \forall  X   \mkern-3mu \in  \operatorname{Fin}(G{-}G_0)\bigr )\,\,
\\[-5mm]
 & \bigl(\exists \varphi \in \{-1,1\}^{G-G_0} \bigr)
\Bigl(G_0 \cap \bigl\langle\bigl\langle \,\,W' \cup \tilde\varphi(X)  \,\,
\bigr\rangle \bigr\rangle   = \emptyset\Bigr)\,\,\biggr \}.\\[-8.5mm]
\end{align*}
It is not difficult to see that~(\ref{def:relconvex}.\ref{d1}) says precisely that  \mbox{$\emptyset \in \WW$}.  Also,
it is clear that\newline
\centerline{$\textstyle (\forall \,W\mkern-3mu \in 2^{\mkern1mu  G-G_0}) \Bigl(\bigl(W\mkern-3mu \in \WW\bigr)\Leftrightarrow\bigl(
\operatorname{Fin}(W) \subseteq \WW\bigr)\Bigr).$}\newline
It follows that $\WW$ is closed in  $2^{\mkern1mu G-G_0}$ under the
 operation of taking unions of chains. By Zorn's Lemma, there exists some maximal element $W$ of~$\WW$.

We shall prove that \mbox{$\langle \langle W \rangle \rangle^{\pm 1} =  G{-}G_0$}, and
thus~(\ref{def:relconvex}.\ref{c1}) holds.
By taking \mbox{$X = \emptyset$} in the definition of `\mbox{$W \mkern-3mu\in \WW$}', we see that
 \mbox{$ \langle \langle W \rangle \rangle  \subseteq G{-}G_0$}, and thus
 \mbox{$W^{\pm1} \subseteq \langle \langle W \rangle \rangle^{\pm 1}  \subseteq G{-}G_0$}.
It remains to show that \mbox{$G{-}G_0 \subseteq W^{\pm 1}$}.  Since $W$ is maximal in~$\WW$, it suffices to show that
\newline\centerline{\mbox{$(\forall x \in G{-}G_0) \bigl( (W \cup \{x\}  \in
\WW)\vee(W \cup \{x^{-1}\} \in \WW)\bigr)$.}}\newline
Suppose then    \mbox{$W \cup \{x\}\mkern-3mu \not\in \WW$}; thus, we may  fix
\mbox{$W_x \mkern-3mu\in \operatorname{Fin}(W)$} and  \mbox{$X_x \mkern-3mu\in \operatorname{Fin}(G{-}G_0)$} such
that\newline
\centerline{$\textstyle  \bigl(\forall \varphi \in \{-1,1\}^{G-G_0}  \bigr)\,\,
 \Bigl(G_0 \,\,\cap\,\, \bigl\langle \bigl\langle\,\, W_x \cup \{x\} \cup \tilde\varphi(X_x)\,\,
 \bigr\rangle\bigr\rangle  \ne \emptyset\Bigr).$}\newline
Let \mbox{$W'\mkern-3mu \in \operatorname{Fin}(W)$} and \mbox{$X\mkern-3mu \in \operatorname{Fin}(G{-}G_0)$}.
As \mbox{$W \mkern-3mu \in \WW$},
there exists
 \mbox{$\varphi \in \{-1,1\}^{G-G_0}$}    such that
\newline
\centerline{$G_0 \cap \bigl\langle \bigl\langle\,\,  W_{x} \cup W' \cup \tilde\varphi(\{x\} \cup X_{x} \cup X) \,\,
  \bigr\rangle \bigr\rangle   = \emptyset.$}\newline
Clearly, \mbox{$\tilde\varphi(x) \ne x$}.  Thus, \mbox{$\tilde\varphi(x)= x^{-1}$} and
\newline
\centerline{$G_0 \cap \bigl\langle \bigl\langle  \,\,  W' \cup \{x^{-1}\}\cup \tilde\varphi(X) \,\,
\bigr\rangle \bigr\rangle   = \emptyset.$}\newline   This shows that \mbox{$W \cup \{x^{-1}\} \mkern-3mu \in \WW$},
 as desired.
 \end{proof}

The Burns-Hale theorem~\cite[Theorem 2]{BH72} says that if each nontrivial, finitely generated subgroup of $G$ maps onto some nontrivial, left-orderable group, then $G$ is left orderable. The following result, using a streamlined version of their proof, generalizes the  Burns-Hale theorem in two ways. Namely, the scope is increased by stating the result for an arbitrary subgroup $G_0$ (in their case $G_0$ is trivial) and by imposing a weaker condition (in their case $\langle X \rangle$ is required to map onto a left-orderable group). 

\begin{theorem}\label{th:BH} If, for each nonempty, finite subset $X$ of $G{-}G_0$,
there exists a proper,
left  rela- tively convex  subgroup of  \mbox{$\langle X \rangle$}
that includes \mbox{$\langle X \rangle \cap\mkern2muG_0$}, then\, $G_0$~is left relatively convex in $G$.
\end{theorem}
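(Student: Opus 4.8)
The plan is to verify criterion~(\ref{def:relconvex}.\ref{d1}): for every finite $X \subseteq G \setminus G_0$ I will produce a subsemigroup $S$ of $G$ with $X \subseteq S^{\pm1} \subseteq G \setminus G_0$. In fact I will prove, by induction on $|X|$, the following uniform statement: \emph{if a group $\Gamma$ and a subgroup $\Gamma_0$ satisfy the hypothesis of the theorem, then for every finite $X \subseteq \Gamma \setminus \Gamma_0$ there is $S \in \operatorname{Ssg}(\Gamma)$ with $X \subseteq S^{\pm1} \subseteq \Gamma \setminus \Gamma_0$}; taking $(\Gamma,\Gamma_0) = (G,G_0)$ then finishes the proof. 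The base case $X = \emptyset$ is settled by the empty subsemigroup.

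For the inductive step assume $X \neq \emptyset$ and put $H := \langle X\rangle$. The hypothesis, applied to $X$ itself, furnishes a proper, left relatively convex subgroup $K$ of $H$ with $H \cap \Gamma_0 = \langle X\rangle \cap \Gamma_0 \subseteq K \subsetneq H$. Applying~(\ref{def:relconvex}.\ref{c1}) to the inclusion $K \leq H$ produces $H_+ \in \operatorname{Ssg}(H)$ with $H_+^{\pm1} = H \setminus K$, $H_+ \cap H_+^{-1} = \emptyset$, and $K H_+ = H_+ K = H_+$. Now split $X$ into the disjoint pieces $X_1 := X \setminus K$ and $X_2 := X \cap K$. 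Because $K \subsetneq \langle X\rangle$, we have $X \not\subseteq K$, so $X_1 \neq \emptyset$ and $|X_2| < |X|$.

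To handle $X_2$, I would first observe that the pair $(K, K \cap \Gamma_0)$ again satisfies the theorem's hypothesis: for a nonempty finite $X' \subseteq K \setminus (K \cap \Gamma_0)$ one has $X' \subseteq \Gamma \setminus \Gamma_0$ and $\langle X'\rangle \cap \Gamma_0 = \langle X'\rangle \cap K \cap \Gamma_0$ (as $\langle X'\rangle \subseteq K$), so a proper, left relatively convex subgroup of $\langle X'\rangle$ including $\langle X'\rangle \cap \Gamma_0$, provided by the hypothesis for $(\Gamma,\Gamma_0)$, also works for $(K,K\cap\Gamma_0)$. Since $X_2 \subseteq K \setminus (K\cap\Gamma_0)$ and $|X_2| < |X|$, the induction hypothesis applied to $(K, K\cap\Gamma_0)$ yields a subsemigroup $S_2$ of $K$ with $X_2 \subseteq S_2^{\pm1} \subseteq K \setminus (K \cap \Gamma_0)$. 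For $X_1$, each element lies in $H_+^{\pm1}$, so replacing the elements of $X_1$ that lie in $H_+^{-1}$ by their inverses gives a finite set $Y_1 \subseteq H_+$ with $X_1 \subseteq Y_1^{\pm1}$. I then take $S := \langle\langle Y_1 \cup S_2\rangle\rangle$, the subsemigroup of $H$ generated by $Y_1 \cup S_2$. Since $Y_1 \cup S_2 \subseteq S$, we at once get $X = X_1 \cup X_2 \subseteq Y_1^{\pm1} \cup S_2^{\pm1} \subseteq S^{\pm1}$.

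The one step requiring genuine care, and the crux of the argument, is checking that $S^{\pm1} \subseteq \Gamma \setminus \Gamma_0$, i.e.\ that $S^{\pm1} \cap \Gamma_0 = \emptyset$. Here the absorption identities $H_+ H_+ \subseteq H_+$ and $K H_+ = H_+ K = H_+$, combined with $Y_1 \subseteq H_+$ and $S_2 \subseteq K$, force a product of elements of $Y_1 \cup S_2$ to lie in $H_+$ whenever a factor from $Y_1$ occurs (look at the first such factor and absorb the $K$-prefix and everything after), and to lie in $S_2$ otherwise; hence $S \subseteq H_+ \cup S_2$, so $S^{\pm1} \subseteq (H\setminus K) \cup S_2^{\pm1}$. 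Now $(H \setminus K) \cap \Gamma_0 = \emptyset$ because $H \cap \Gamma_0 \subseteq K$, and $S_2^{\pm1} \cap \Gamma_0 = \emptyset$ because $S_2^{\pm1} \subseteq K \setminus (K\cap\Gamma_0)$; therefore $S^{\pm1} \cap \Gamma_0 = \emptyset$. This completes the induction, and the uniform statement applied to $(G,G_0)$ gives criterion~(\ref{def:relconvex}.\ref{d1}), so $G_0$ is left relatively convex in $G$.
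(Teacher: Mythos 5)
Your proof is correct and follows essentially the same route as the paper's: induction on $|X|$, splitting $X$ by the proper left relatively convex subgroup of $\langle X\rangle$ supplied by the hypothesis, and combining the positive cone $H_+$ obtained from~(\ref{def:relconvex}.\ref{c1}) with the inductively produced subsemigroup for the smaller piece. The only cosmetic differences are that you run the induction uniformly over pairs $(K,K\cap\Gamma_0)$ where the paper keeps $(G,G_0)$ fixed and instead records that the inductively obtained subsemigroup lies in $\operatorname{Ssg}(\langle X\cap H_0\rangle)\subseteq H_0$, and that you take the subsemigroup generated by $Y_1\cup S_2$ and verify it sits inside $H_+\cup S_2$, whereas the paper takes the union $S_{X\cap H_0}\cup H_+$ directly and checks it is already closed under multiplication.
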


\begin{proof} For each finite subset $X$ of  \mbox{$G{-}G_0$}, we shall  construct an
 \mbox{$S_X \in \operatorname{Ssg}(\langle X \rangle)$} such that
 \mbox{$X  \subseteq S_X^{\pm 1} \subseteq G{-}G_0$}, and then~(\ref{def:relconvex}.\ref{d1}) above will hold.
We set \mbox{$S_{\emptyset}:= \emptyset$}.  We now assume that \mbox{$X \ne \emptyset$}.  Let us
write \mbox{$H:= \langle X \rangle$}.  By  hypothesis, we have an $H_0$ such that \mbox{$H \cap G_0
\leq H_0 < H$} and $H_0$ is left relatively convex in $H$. Notice that \mbox{$H{-}H_0\subseteq
H{-}(H\cap G_0) \subseteq  G{-}G_0$} and \mbox{$X \cap H_0   \subset X$}, since \mbox{$X \nsubseteq
H_0$}. By induction on \mbox{$\vert X \vert$}, we have an \mbox{$S_{X\cap H_0} \in
\operatorname{Ssg}(\langle X \cap H_0 \rangle)$}  such that \mbox{$X \cap H_0   \subseteq S_{X\cap
H_0}^{\pm 1} \subseteq G{-}G_0$}. By~(\ref{def:relconvex}.\ref{c1}) above, since $H_0$ is left
relatively convex in $H$, we have an \mbox{$H_+ \in \operatorname{Ssg}(H)$} such that
 \mbox{$H_0 H_+ H_0 = H_+$}  and \mbox{$H_+^{\pm1} = H{-}H_0$}.
We set  \mbox{$S_X:=  S_{X\cap H_0}  \cup H_+$}. Then \mbox{$S_X \in \operatorname{Ssg}(H)$}, since
\mbox{$S_{X \cap H_0} \subseteq H_0$} and \mbox{$H_0H_+H_0 =H_+$}. Also, \newline
\vspace{-.5cm}\centerline{
 \mbox{$X =  (X \cap H_0) \cup (X{-}H_0)  \subseteq   S_{X\cap H_0}^{\pm 1}
\cup (H{-}H_0) = S_X^{\pm 1} \subseteq G{-}G_0.$}}
\end{proof}

\begin{remark}  Theorem~\ref{th:BH} above has a variety of corollaries.  For example,
 for any subset $X$ of  \mbox{$G$},  we have a sequence of successively weaker conditions:
\mbox{$\langle  X    \cup G_0 \rangle/\normgen{\mkern1muG_0\mkern1mu}$} maps onto~$\ZZ$;
\mbox{$\langle  X    \cup G_0 \rangle/\normgen{\mkern1muG_0\mkern1mu}$} maps onto a nontrivial, left-orderable group;
there exists a proper, left  relatively convex   sub\-group  of
\mbox{$\langle X \cup G_0  \rangle$} that includes \mbox{$G_0$}; and,
there exists a proper, left  relatively convex  subgroup of  \mbox{$\langle X \rangle$}
that includes \mbox{$\langle X \rangle \cap\mkern2muG_0$}. The last implication follows from the following fact. If $A$ and $B$ are subgroups of $G$ and $A$ is left relatively convex in $G$, then $A \cap B$ is left relatively convex in $B$. 
\end{remark}

\begin{definition}
A group $G$ is said to be $n$-\emph{indicable}, where $n$ is a positive integer, if it can be
generated by fewer than $n$ elements or it admits a surjective homomorphism onto $\ZZ^n$.

A group $G$ is \emph{locally $n$-indicable} if every finitely generated subgroup of $G$ is
$n$-indicable.
\end{definition}

Note that some authors require in the definition of indicability that $G$ admits a surjective
homomorphism onto $\ZZ$, while here 1-indicable means that $G$ is trivial or maps onto $\ZZ$,
2-indicable means that $G$ is cyclic or maps onto $\ZZ^2$, and so on.

\begin{example}
Free abelian groups of any rank and free groups of any rank are locally $n$-indicable for every
$n$.
\end{example}

The notion of $n$-indicability is related to left relative convexity through the following
corollary of Theorem~\ref{th:BH}.

\begin{corollary}\label{cor:BH}
Let $n \geq 2$. If $G$ is locally $n$-indicable group then each maximal $(n-1)$-generated subgroup
of $G$ is left relatively convex in $G$.

In particular, in a free group, each maximal cyclic subgroup is left relatively convex.
\end{corollary}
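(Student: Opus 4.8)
The plan is to apply Theorem~\ref{th:BH} directly, taking $G_0$ to be a maximal $(n-1)$-generated subgroup $C$ of $G$. Let $X$ be a nonempty finite subset of $G-C$, and set $H := \langle X \cup C\rangle$; actually, to match the hypothesis of Theorem~\ref{th:BH} literally we should work with $\langle X\rangle$, but the remark following the theorem lets us instead exhibit a proper, left relatively convex subgroup of $\langle X\cup C\rangle$ that contains $C$, since such a subgroup intersects $\langle X\rangle$ in a proper, left relatively convex subgroup of $\langle X\rangle$ containing $\langle X\rangle\cap C$ (using the fact recorded there that $A\cap B$ is left relatively convex in $B$ whenever $A$ is left relatively convex in $G$, together with $\langle X\rangle \subsetneq H$ forcing the intersection to be proper). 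So it suffices to produce, for each such $H$, a proper left relatively convex subgroup of $H$ containing $C$.

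The key step is the following: $H$ is finitely generated (generated by $X$ together with the $\leq n-1$ generators of $C$), so by local $n$-indicability $H$ is $n$-indicable. If $H$ is generated by fewer than $n$ elements, then $H$ itself is $(n-1)$-generated and contains $C$; since $C$ is a \emph{maximal} $(n-1)$-generated subgroup and $C \subseteq H$, maximality forces $C = H$, contradicting $X \subseteq H - C \neq \emptyset$. Hence $H$ must admit a surjective homomorphism $\psi\colon H \twoheadrightarrow \ZZ^n$. Now $\psi(C)$ is a subgroup of $\ZZ^n$ generated by at most $n-1$ elements, hence has rank at most $n-1$, so $\psi(C) \subsetneq \ZZ^n$ and we may choose a further surjection $\ZZ^n \twoheadrightarrow \ZZ$ killing $\psi(C)$ (project onto a coordinate, or any direction, avoiding the rank-$(\le n-1)$ sublattice $\psi(C)\otimes\mathbb Q$). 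Composing gives $\phi\colon H \twoheadrightarrow \ZZ$ with $C \subseteq \ker\phi \subsetneq H$. Since $\ZZ$ is left orderable, $\ker\phi$ is left relatively convex in $H$ (the left $H$-set $H/\ker\phi \cong \ZZ$ carries the $H$-invariant order pulled back from the usual order on $\ZZ$). Thus $\ker\phi$ is a proper, left relatively convex subgroup of $H$ containing $C$, as required.

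The main obstacle, such as it is, is the bookkeeping around "maximal $(n-1)$-generated": one must check that maximality is exactly what rules out the degenerate case where $H$ is $(n-1)$-generated, and that it gives us nothing stronger than we need. Everything else is a routine assembly — applying Theorem~\ref{th:BH}, invoking the remark to pass between $\langle X\rangle$ and $\langle X\cup C\rangle$, and using that kernels of surjections onto $\ZZ$ are left relatively convex. For the final sentence of the corollary, take $n = 2$: a free group is locally $2$-indicable (a finitely generated free group of rank $\ge 2$ surjects onto $\ZZ^2$, and rank $\le 1$ free groups are cyclic, i.e.\ $1$-generated), so each maximal cyclic (= maximal $1$-generated) subgroup is left relatively convex. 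The case of the trivial subgroup, which recovers left orderability of free groups, is the degenerate case and is covered since the trivial subgroup is $1$-generated and is left relatively convex in $\{1\}$ vacuously.
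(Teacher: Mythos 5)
Your argument is correct and is essentially the paper's own proof: use maximality of $G_0=C$ to force the finitely generated group $H=\langle X\cup C\rangle$ to surject onto $\ZZ^n$, compose with a surjection $\ZZ^n\to\ZZ$ killing the (rank $\le n-1$) image of $C$, and feed the resulting proper kernel into Theorem~\ref{th:BH} via the chain of implications in the remark following it. One small repair to a parenthetical: the properness of $A\cap\langle X\rangle$ in $\langle X\rangle$ does not follow from ``$\langle X\rangle\subsetneq H$'' (which need not even hold); it follows because any subgroup of $H$ containing both $C$ and all of $\langle X\rangle$ would contain $\langle X\cup C\rangle=H$, whereas $A$ is proper in $H$.
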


\begin{proof}
If the subgroup $G_0$ is maximal $(n-1)$-generated subgroup of $G$, then, for any nonempty, finite
subset $X$ of $G{-}G_0$, \mbox{$\langle  X  \cup G_0 \rangle$} maps onto \mbox{$\ZZ^n$}, and
\mbox{$\langle  X  \cup G_0 \rangle/\normgen{\mkern1muG_0\mkern1mu}$} maps onto~$\ZZ$.
\end{proof}

The idea of Corollary~\ref{cor:BH} can be used to show that certain maximal abelian subgroups are left relatively convex.

\begin{definition}
A group $G$ is \textit{\nasmof} if it is torsion-free and  every nonabelian finitely generated subgroup of $G$ admits a surjective homomorphism onto $\mathbb{Z}*\mathbb{Z}$. 
\end{definition}

\begin{example}
The class of {\nasmof} groups contains free and free abelian groups and it is closed under taking subgroups and direct products. 
Residually {\nasmof} groups are {\nasmof}, and in particular residually free groups are {\nasmof}. 
Every {\nasmof} group $G$ is $2$-locally indicable, and by Corollary \ref{cor:BH},  maximal
cyclic subgroups are left relatively convex. 
\end{example}

\begin{corollary}\label{cor:nasmof}
Let $n$ be a non-negative integer. If $G$ is a {\nasmof} group then each  maximal $n$-generated abelian subgroup of $G$ is left relatively convex in $G$.

In particular, in a residually free group, each maximal $n$-generated abelian subgroup is left relatively
convex.
\end{corollary}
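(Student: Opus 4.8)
The plan is to deduce this from Theorem~\ref{th:BH}, verifying for each nonempty finite subset $X$ of $G{-}G_0$ one of the successively stronger reformulations listed in the Remark following that theorem. Fix a {\nasmof} group $G$ and a maximal $n$-generated abelian subgroup $G_0$; since $G_0$ is finitely generated and $G$ is torsion-free, $G_0$ is free abelian of some rank $r \leq n$. Given a nonempty finite $X \subseteq G{-}G_0$, put $L := \langle X \cup G_0 \rangle$, which is finitely generated (as $X$ is finite and $G_0$ is finitely generated), and note that $X \neq \emptyset$ together with $X \cap G_0 = \emptyset$ forces $G_0 \subsetneq L$. By the Remark it then suffices to check that $L/\normgen{G_0}$ maps onto $\ZZ$, or at least to exhibit a proper left relatively convex subgroup of $L$ that contains $G_0$; either of these implies the hypothesis of Theorem~\ref{th:BH} for $X$, and hence the conclusion that $G_0$ is left relatively convex in $G$.

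I would then split according to whether $L$ is abelian. If $L$ is abelian it is finitely generated and torsion-free, so $L \cong \ZZ^m$; since $G_0 \subsetneq L$ with $G_0$ maximal among $n$-generated abelian subgroups, $L$ cannot be $n$-generated, so $m \geq n+1$. Hence $L/G_0$ is a finitely generated abelian group of rank $m - r \geq m - n \geq 1$, so it surjects onto $\ZZ$, and because $L$ is abelian $\normgen{G_0} = G_0$, so $L/\normgen{G_0}$ surjects onto $\ZZ$. If $L$ is nonabelian then, as $G$ is {\nasmof}, there is a surjection $\psi\colon L \twoheadrightarrow F$ with $F = \ZZ*\ZZ$; the image $\psi(G_0)$ is an abelian, hence cyclic, subgroup of the free group $F$, so it lies in some maximal cyclic subgroup $C$ of $F$, and $C$ is left relatively convex in $F$ by Corollary~\ref{cor:BH}. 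Since $\psi$ is onto, $L/\psi^{-1}(C)$ is isomorphic as an $L$-set to $F/C$ (with $L$ acting through $\psi$), so $F/C$ being $F$-orderable makes $L/\psi^{-1}(C)$ be $L$-orderable; thus $\psi^{-1}(C)$ is left relatively convex in $L$, it is proper (as $C$ is cyclic but $F$ is not), and it contains $G_0 \subseteq \psi^{-1}(\psi(G_0))$. In both cases Theorem~\ref{th:BH} applies and $G_0$ is left relatively convex in $G$. The final assertion is then immediate, since a residually free group is residually {\nasmof} --- free groups being {\nasmof} --- hence {\nasmof} by the closure property recorded earlier.

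The substantive input, that maximal cyclic subgroups of free groups are left relatively convex, is already packaged in Corollary~\ref{cor:BH}, so the nonabelian case needs nothing new beyond the routine observations that a surjection pulls a $G$-order back to a $G$-order (and transports it along the induced equivariant bijection of coset spaces) and that $\psi^{-1}(C) \neq L$. I expect the only point requiring genuine, if elementary, care to be in the abelian case: the rank bookkeeping showing that a finitely generated torsion-free abelian group which is not $n$-generated has rank exceeding $n$, so that $L/G_0$ has strictly positive rank and therefore admits $\ZZ$ as a quotient. Keeping straight which of the equivalent formulations from the Remark is being invoked in each branch is the other thing to be careful about, but neither of these is a real obstacle.
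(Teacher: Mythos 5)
Your proof is correct, and it follows the paper's argument almost exactly: the paper likewise fixes $L=\langle X\cup G_0\rangle$, splits on whether $L$ is abelian, uses maximality of $G_0$ to get rank $>n$ in the abelian case, and uses the {\nasmof} surjection onto $\ZZ*\ZZ$ in the nonabelian case, feeding the result into Theorem~\ref{th:BH} via the Remark. The one place you diverge is the finish of the nonabelian case: the paper stays with the strongest condition in the Remark, observing that $L/\normgen{\mkern1muG_0\mkern1mu}$ still maps onto $\ZZ$ (since the image of $G_0$ in $\ZZ*\ZZ$ is cyclic, so killing it normally leaves an abelianization of rank at least one), whereas you verify the weaker condition by pulling back a maximal cyclic subgroup $C\supseteq\psi(G_0)$ of $\ZZ*\ZZ$, which is left relatively convex by Corollary~\ref{cor:BH}, to a proper left relatively convex subgroup $\psi^{-1}(C)$ of $L$ containing $G_0$. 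Both finishes are valid; the paper's is more elementary (pure abelianization bookkeeping), while yours re-uses the already-established machinery for free groups and requires the routine but unstated lemma that left relative convexity pulls back along surjections --- if you keep your version, that lemma deserves the one line of justification you sketch (the $L$-equivariant bijection $L/\psi^{-1}(C)\to F/C$ transports the $F$-order).
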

\begin{proof}
If the subgroup $G_0$ is maximal $n$-generated abelian subgroup of $G$, then, for any nonempty, finite
subset $X$ of $G{-}G_0$, either \mbox{$\langle  X  \cup G_0 \rangle$} is  finitely generated, torsion-free abelian group of rank greater than $n$ or \mbox{$\langle  X  \cup G_0 \rangle$}  maps onto \mbox{$\ZZ*\ZZ$}. In both cases, \mbox{$\langle  X  \cup G_0 \rangle/\normgen{\mkern1muG_0\mkern1mu}$} maps onto~$\ZZ$.
\end{proof}

\section{Graphs of groups}\label{sec:graphs}

\begin{definitions}\label{defs:trees0} \label{defs:trees}
By a  \textit{graph}, we mean a quadruple $( \Gmma , V, \iota, \tau)$
such that $\Gmma$ is a set,  $V$ is a subset of $\Gmma$, and
$\iota$ and $\tau$ are  maps from
\mbox{$\Gmma  {-}\, V$} to $V$.
Here, we let $\Gmma$ denote the graph as well as the set, and  we write
 \mbox{$\operatorname{V}\mkern-4mu \Gmma := V$} and \mbox{$\operatorname{E}\mkern-1mu \Gmma:= \Gmma  {-}\, V$}, called the
\textit{vertex-set} and \textit{edge-set}, respectively.
We then define  \textit{vertex}, \textit{edge}
\mbox{$\iota e \xrightarrow{e}\mkern-9mu- \tau e$},
\textit{inverse edge} \mbox{$\tau e \xrightarrow{e^{-1}}\mkern-9mu- \iota e$},  \textit{path}  \vspace{1mm} \newline
(\ref{defs:trees0}.1) \hskip1.1cm  $v_0
\xrightarrow{e_1^{\epsilon_1}}{\mkern-9mu-} \,  v_1
 \xrightarrow{e_2^{\epsilon_2}}{\mkern-9mu-} \,   v_2
\xrightarrow{e_3^{\epsilon_3}}{\mkern-9mu-} \, \cdots
\xrightarrow{e_{n-2}^{\epsilon_{n-2}}}{\mkern-9mu-} \,
v_{n-2} \xrightarrow{e_{n-1}^{\epsilon_{n-1}}}{\mkern-9mu-} \,  v_{n-1}
 \xrightarrow{e_n^{\epsilon_n}}{\mkern-9mu-} \,   v_n,\,\, n \geq 0,$ \vspace{2mm} \newline
 \textit{reduced path}, and \textit{connected graph}  in the usual way.
We say that $\Gmma$ is a
 \textit{tree} if \mbox{$V \ne \emptyset$} and, for each $(v,w) \in V \times V$, there exists a
unique reduced  path  from $v$ to~$w$. The \textit{barycentric subdivision of}  $\Gmma$ is the
graph  $\Gmma^{(')}$ such that \mbox{$\operatorname{V}\mkern-4mu\Gmma^{(')} = \Gmma$} and
\mbox{$\operatorname{E}\Gmma^{(')} = \operatorname{E}\mkern-1mu\Gmma \times \{\iota, \tau\}$}, with
\mbox{$e \xrightarrow{(e,\iota)}{\mkern-13mu}-\iota e$} and
\mbox{$e \xrightarrow{(e,\tau)}{\mkern-13mu}-\tau e$}.

We say that $\Gmma$ is a \textit{left $G$-graph} if $\Gmma$ is a left $G$-set,
 $V$  is a $G$-subset of $\Gmma$, and $\iota$ and $\tau$ are $G$-maps.
For \mbox{$\gamma \in \Gmma$}, we let \mbox{$G_\gamma$} denote the $G$-stabilizer of $\gamma$.

Let $T$ be a tree.  A  \textit{local order on}  $T$ is a family  $(<_v \,\,\mid v \in \operatorname{V}\mkern-2mu T)$
such that, for each $v \in \operatorname{V}\mkern-2mu T$, $<_v$ is an order on
$\operatorname{link}_T(v) := \{e \in  \operatorname{E}  T \mid v \in \{\iota e, \tau e\} \}$.
By Theorem~3 of~\cite{DS14}, for each local order
 $(<_v \,\,\mid v \in \operatorname{V}\mkern-2mu T)$  on $T$,
 there exists a unique order $<_{\null_{\mkern2mu\scriptstyle{T}}}$
on~$\operatorname{V}\mkern-2mu T$   such that,  for each reduced $T$-path expressed as
in (\ref{defs:trees0}.1) above,
\newline \centerline{
$\textstyle{\operatorname{sign}(v_0,<_{\null_{\mkern2mu\scriptstyle{T}}}, v_n) =
\operatorname{sign}\bigl(0, \,\,<_{\null_{\mkern2mu\scriptstyle \ZZ}}, \,\,\sum\limits_{i=1}^n \epsilon_i +
\sum\limits_{i=1}^{n-1} \operatorname{sign}( e_{i} , <_{v_i} ,e_{i+1})\bigr),}$}\vspace{1mm}\newline
 where the sign notation is as in Definitions~\ref{defs:order} above.
We then call $<_{\null_{\mkern2mu\scriptstyle{T}}}$  the \textit{associated order},
\mbox{$\sum_{i=1}^n \epsilon_i$}  the \textit{orienta\-tion\d1sum},  and
\mbox{$\sum_{i=1}^{n-1} \operatorname{sign}( e_{i} , <_{v_i} ,e_{i+1})$} the
\textit{turn-sum}.  If $T$ is a left $G$-tree, then, for any  $G$-invariant local order on $T$,
  the associated order on $\operatorname{V}\mkern-3mu T$
is easily seen to be  a $G$-order.
\end{definitions}

\begin{theorem}\label{th:tree} Suppose that\, $T$ is a left $G$-tree such that, for each $T\mkern-3mu$-edge~\,$e$,
 $G_e$ is left relatively convex in  $G_{\iota e}$ and in $G_{\tau e}$.
Then, for each   \mbox{$t \in T$}, $G_t$ is left relatively convex in~$G$.
If there exists some  \mbox{$t \in T$} such that $G_t$ is left orderable, then $G$ is
left orderable.
Moreover, if the input orders are given effectively, then the output orders are  given effectively,
\end{theorem}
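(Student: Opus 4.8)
The idea is to reduce everything to the machinery of local orders on trees developed in Definitions~\ref{defs:trees}, together with the transfer operations of Definitions~\ref{defs:chain}. First I would fix, for each $T$-edge $e$, the hypothesized data: since $G_e$ is left relatively convex in $G_{\iota e}$ and in $G_{\tau e}$, the left $G_{\iota e}$-set $G_{\iota e}/G_e$ and the left $G_{\tau e}$-set $G_{\tau e}/G_e$ are orderable, so choose such orders. The first key step is to promote this edge-wise data to a $G$-invariant local order on the barycentric subdivision $T^{(')}$ (or on $T$ directly, after choosing orbit representatives). Concretely, for a vertex $v$ of $T^{(')}$ lying in $\operatorname{V} T$, the link of $v$ is in $G_v$-equivariant bijection with a disjoint union of sets $G_v/G_e$ over edges $e$ at $v$ (one copy per $G_v$-orbit of such edges); patch the chosen orders on these cosets together into a single $G_v$-order on $\operatorname{link}(v)$, and do so $G$-equivariantly by transporting along the $G$-action on orbit representatives. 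For a vertex of $T^{(')}$ lying in $\operatorname{E} T$, the link has exactly two elements and carries a unique order. This yields a $G$-invariant local order on $T^{(')}$.

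**From the local order to the conclusion.** Now invoke Theorem~3 of~\cite{DS14}, quoted in Definitions~\ref{defs:trees}: the $G$-invariant local order produces a $G$-order $<_{\null_{\mkern2mu\scriptstyle{T^{(')}}}}$ on $\operatorname{V} T^{(')} = T$. Fix $t \in T$ and let $\Omega$ be its $G$-orbit in $T$; then $G/G_t \cong \Omega$ as $G$-sets, and $\Omega$ inherits a $G$-order by restriction of $<_{\null_{\mkern2mu\scriptstyle{T^{(')}}}}$. Hence $G/G_t$ is $G$-orderable, i.e.\ $G_t$ is left relatively convex in $G$, which is condition (\ref{def:relconvex}.\ref{a1}). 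For the second assertion, suppose some $G_t$ is left orderable. Take a vertex-orbit representative; combining a left order on $G_t$ (condition (\ref{defs:chain}.\ref{c2})-style) with the $G$-order just constructed on $G/G_t$ via the $<_{\operatorname{bottom}}$/$<_{\operatorname{top}}$ reconstruction of Definitions~\ref{defs:chain} (with $K=\{1\}$, $H=G_t$) yields a left order on $G$; so $G$ is left orderable.

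**Effectivity.** For the last clause, I would simply observe that every construction above is algorithmic relative to the inputs: patching finitely many coset orders per orbit, choosing orbit representatives, and transporting by the $G$-action are effective; the passage from a local order to the associated order in \cite{DS14} is effective because the sign of $(v_0, <_{\null_{\mkern2mu\scriptstyle{T}}}, v_n)$ is computed from the finite reduced path by the displayed orientation-sum-plus-turn-sum formula, each turn being a comparison in one of the given $<_{v_i}$; and the $<_{\operatorname{bottom}}$/$<_{\operatorname{top}}$ reconstruction is a finite case split. So if the edge orders (and, for the second part, the left order on $G_t$) are given effectively, the resulting orders on $G/G_t$ and on $G$ are given effectively.

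**Main obstacle.** The technical heart is the $G$-equivariant patching in the first step: one must check that choosing, once and for all, a transversal for the $G$-action on edges and a compatible transversal on vertices, the locally defined orders on the links glue into a genuinely $G$-invariant family $(<_v)$ rather than merely a family of orders each invariant under its own stabilizer. This is where one uses that distinct edges in the same link at $v$ that are $G_v$-equivalent must be ordered consistently, and that the $G$-action carries link of $v$ to link of $gv$ order-isomorphically by construction; handling edge-inversions (an edge $e$ with $\iota e$ and $\tau e$ in the same $G$-orbit, or $e$ in the same orbit as $e^{-1}$) is the fiddly case, and is precisely why passing to the barycentric subdivision $T^{(')}$ — which the paper has already introduced for this purpose — is convenient, since $T^{(')}$ has no inverted edges. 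Everything after that is bookkeeping with the cited theorem and the reconstruction lemma.
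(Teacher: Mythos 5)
Your proposal is correct and follows essentially the same route as the paper: choose orbit representatives, use the hypothesized $G_v$-orders on the sets $G_v/G_e$ to patch together a $G$-invariant local order, invoke Theorem~3 of \cite{DS14} to get the associated $G$-order, and restrict it to orbits $Gt \cong G/G_t$, then finish with the reconstruction of Definitions~\ref{defs:chain} for left orderability. The only (harmless) deviation is that you run the construction on the barycentric subdivision $T^{(')}$ so that edge stabilizers are treated as vertex stabilizers, whereas the paper orders $\operatorname{V}\mkern-2mu T$ only and then deduces that each $G_e$ is left relatively convex in $G$ from the chain $G_e \leq G_{\iota e} \leq G$ via Definitions~\ref{defs:chain}; both work, though note the two-element link of an edge-vertex of $T^{(')}$ admits two orders, not a unique one (either is $G_e$-invariant, so this does not matter).
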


\begin{proof}   We choose one representative from each $G$-orbit in $\operatorname{V}\mkern-2mu T$.
For each representative~$v_0$, we choose an arbitrary order on the set of $G_{v_0}$-orbits $G_{v_0} \backslash
\operatorname{link}_T(v_0)$,
and, within each $G_{v_0}$-orbit, we choose one representative   $e_0$ and
a $G_{v_0}$-order on $G_{v_0}/G_{e_0}$, which exists by~(\ref{def:relconvex}.\ref{a1}) above;
since our $G_{v_0}$-orbit
$G_{v_0}e_0$ may be identified with $G_{v_0}/G_{e_0}$,
we then have a
 $G_{v_0}$-order on $G_{v_0}e_0$, and then on all of $\operatorname{link}_T({v_0})$ by our order
on  $G_{v_0} \backslash \operatorname{link}_T({v_0})$.  We then use    $G$-translates to
obtain a $G$-invariant local   order  on  $T$.  This in turn gives
the associated   $G$-order on $\operatorname{V}\mkern-2mu T$ as in  Definitions~\ref{defs:trees} above.
 In particular, for each $T\mkern-3mu$-vertex
$v$, we have   $G$-orders on $Gv$ and  $G/G_v$.   By~(\ref{def:relconvex}.\ref{a1}) above,
$G_v$ is then left relatively convex in $G$.  For each $T\mkern-3mu$-edge~\,$e$,  $G_e$ is
left relatively convex in $G_{\iota e}$ by
hypothesis, and then $G_e$ is left relatively convex in $G$ by Definitions~\ref{defs:chain} above.
 Thus, for each \mbox{$t \in T$}, $G_t$ is
left relatively convex in $G$.

By~(\ref{defs:chain}.\ref{c2})$\Rightarrow$(\ref{defs:chain}.\ref{b2}) above, if there exists some
\mbox{$t \in T$} such that $G_t$ is left orderable, then $G$ is
left orderable.
\end{proof}

\begin{example}\label{ex:trees} Let $F$ be a free group and $X$ be a free-generating set of $F$. The left Cayley graph   of
$F$ with respect to $X$ is a left $F$-tree on which $F$ acts freely. Thus, the fact that free groups are left orderable can
be deduced from  Theorem~\ref{th:tree} above; see~\cite{DS14}.
\end{example}

\begin{definitions}
 By a \textit{graph of groups} $(\GG , \Gmma)$, we mean a graph with vertex-set a family of groups
 \mbox{$(\GG (v')\mid v'  \in \operatorname{V}\Gmma^{(')})$}
and edge-set a family of injective group homomorphisms
\mbox{$(\GG (e) \xrightarrow{\GG (e')} \GG (v) \mid e\xrightarrow{e'}\mkern-9mu-v
 \in \operatorname{E}\Gmma^{(')})$}, where $\Gmma$ is a nonempty, connected graph and $\Gmma^{(')}$ is its barycentric subdivision.
For $\gamma  \in \Gmma^{(')}$, we call $\GG (\gamma)$ a \textit{vertex group}, \textit{edge group}, or \textit{edge map} if
$\gamma$ belongs to \mbox{$\operatorname{V}\mkern-4mu \Gmma$}, \mbox{$\operatorname{E}\mkern-1mu \Gmma$}, or
\mbox{$\operatorname{E}\mkern-1mu \Gmma^{(')}$}, respectively.
One may think of $(\GG , \Gmma)$ as a nonempty, connected
graph, of groups and injective group homomorphisms, in which every vertex is either
a sink, called a vertex group, or a source of valence two, called an edge group.  We shall use the
\textit{fundamental group} and the \textit{Bass-Serre tree} of $(\GG , \Gmma)$ as defined in~\cite{Ser77} and~\cite{DD89}.
\end{definitions}

Bass-Serre theory translates Theorem~\ref{th:tree} above into the following form.

\begin{theorem}\label{th:fund} Suppose that $G$ is the fundamental group of
a graph of groups $(\GG , \Gmma)$ such that the image of each  edge map
 \mbox{$\GG (e) \xrightarrow{\GG (e')} \GG (v)$}
is left relatively convex in its vertex group,~\mbox{$\GG (v)$}.
Then each  vertex group   is left relatively convex in $G$.
If   some  vertex  group   is  left orderable, then $G$ is
  left orderable.  Moreover, if the input orders are  given effectively,
 then the output orders are  given effectively.   \qed
\end{theorem}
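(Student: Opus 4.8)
The plan is to obtain Theorem~\ref{th:fund} as a direct translation of Theorem~\ref{th:tree} via Bass-Serre theory, so essentially no new mathematics is needed---only the standard dictionary between graphs of groups and group actions on trees. First I would let $T$ be the Bass-Serre tree of $(\GG,\Gmma)$ with respect to the given fundamental group $G$; recall from \cite{Ser77} and \cite{DD89} that $T$ is a left $G$-tree, that the quotient graph $G\backslash T$ is (isomorphic to) $\Gmma$, and that for a suitable choice of lifts the vertex stabilizers $G_t$ for $t\in\operatorname{V}T$ are exactly the conjugates in $G$ of the vertex groups $\GG(v)$, $v\in\operatorname{V}\Gmma$, while the edge stabilizers $G_e$ are the conjugates of the images of the edge maps. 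Under this identification, for an edge $e$ of $T$ lying over an edge $e'$ of $\Gmma^{(')}$ incident to a vertex $v$, the inclusion $G_e\leq G_{\iota e}$ (resp.\ $G_e \leq G_{\tau e}$) is, up to conjugation in $G$, precisely the inclusion of the image of the edge map $\GG(e)\xrightarrow{\GG(e')}\GG(v)$ into $\GG(v)$.

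The key observation is then that left relative convexity is preserved under the isomorphisms induced by conjugation: if $A\leq B\leq G$ and $A$ is left relatively convex in $B$, then for any $g\in G$ the conjugate $A^g$ is left relatively convex in $B^g$, since conjugation by $g$ is a group isomorphism $B\to B^g$ carrying $A$ to $A^g$, and condition (\ref{def:relconvex}.\ref{c1}) (the existence of a subsemigroup $B_+$ with $B_+^{\pm1}=B-A$) transports verbatim under any such isomorphism. Hence the hypothesis that the image of each edge map is left relatively convex in its vertex group implies, for every edge $e$ of the Bass-Serre tree $T$, that $G_e$ is left relatively convex in both $G_{\iota e}$ and $G_{\tau e}$. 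This is exactly the hypothesis of Theorem~\ref{th:tree}.

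Applying Theorem~\ref{th:tree} to $T$, we conclude that $G_t$ is left relatively convex in $G$ for every $t\in T$; in particular every vertex stabilizer, hence (again transporting back by conjugation, or simply choosing the standard lifts so that $G_{v_0}=\GG(v)$ on the nose for a chosen lift $v_0$ of $v$) every vertex group $\GG(v)$ is left relatively convex in $G$. Likewise, if some vertex group $\GG(v)$ is left orderable, then the corresponding stabilizer $G_{v_0}$ is left orderable, so the second clause of Theorem~\ref{th:tree} gives that $G$ is left orderable. Finally, the effectivity clause is inherited: the construction in the proof of Theorem~\ref{th:tree} builds the output orders effectively from the input local orders, and the Bass-Serre dictionary is itself effective (a transversal for $G\backslash T$ and the identifications of stabilizers are computable from the graph-of-groups data), so effective input edge orders yield effective output orders on the vertex groups and, when applicable, on $G$.

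The only point requiring any care---and the place I would be most careful in writing the details---is keeping the bookkeeping of conjugacy straight: one must fix once and for all a lift $v_0\in\operatorname{V}T$ of each vertex $v\in\operatorname{V}\Gmma$ with $G_{v_0}=\GG(v)$, and check that the incident edges of $T$ at $v_0$, together with their stabilizers, realize (up to $G_{v_0}$-conjugacy, which is harmless for left relative convexity by the transport remark above) exactly the images of the edge maps $\GG(e)\to\GG(v)$ ranging over edges $e$ of $\Gmma^{(')}$ incident to $v$. Since this is the standard content of Bass-Serre theory and involves no genuine difficulty, the theorem follows; this is why the original text simply appends \qed.
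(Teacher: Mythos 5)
Your proposal is correct and is exactly the route the paper intends: it states that ``Bass-Serre theory translates Theorem~\ref{th:tree} into the following form'' and appends \qed, leaving precisely the dictionary you spell out (edge/vertex stabilizers of the Bass-Serre tree are conjugates of the edge-map images and vertex groups, and left relative convexity transports under conjugation via condition~(\ref{def:relconvex}.\ref{c1})). No gaps; your write-up just makes the implicit bookkeeping explicit.
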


\begin{remarks}  Theorem~\ref{th:fund} above generalizes the  result of  Chiswell
 that a
group is left orderable if it is the fundamental group of
a graph of groups such that each vertex group
is left ordered and each edge group is convex in each of its vertex groups;
see Corollary~3.5 of \cite{chiswell:ordered}.

The result of Chiswell is a consequence of Corollary 3.4 of \cite{chiswell:ordered},
which shows that a group is left orderable if it is the fundamental group of
a graph of groups such that each edge group is left orderable and each of its left orders extends
to a left order on each of its vertex groups.  (If, moreover,
 each edge group and vertex group is left ordered, and the maps from edge groups to
vertex groups respect the orders, then the fundamental group has a left order such that
the maps from the vertex groups to the fundamental group respect the orders.)
 This applies to the case of cyclic edge groups and left-orderable vertex groups.

Corollary 3.4 of \cite{chiswell:ordered} is, in turn,  a  consequence
of Chis\-well's   necessary and sufficient conditions for the fundamental group
of a graph of groups to be left orderable.  As his proof  involved ultraproducts, his
 orders were not constructed effectively.
\end{remarks}

\begin{example}\label{ex:A*B} Let  $A$ and $B$ be  groups, $C$ be a subgroup of $A$,
and \mbox{$x:C \to B$, $c \mapsto c^{\mkern3mu x},$}  be an injective homomorphism.
The graph of groups \mbox{$A \leftarrow C \rightarrow B$}, where the maps
are the inclusion map and $x$, has as fundamental group
\mbox{$A*_CB:= A*B/\normgen{\{ c^{-1}{\cdot}  c^{\mkern3mu x}
\mid c \in C\}},$}  called the
\textit{free product with amalgamation} with \textit{vertex groups $A$ and $B$}, \textit{edge group} $C$,
and \textit{edge map} $x$.
We then view $A$ and $B$ as subgroups of \mbox{$A*_CB$}.  In particular, $c^{\mkern3mu x}  = c$.

If $C$ is left relatively convex in each of $A$ and $B$, then $A$ and $B$ are
left relatively convex in \mbox{$A*_CB$}, by Theorem~\ref{th:fund} above.

In detail, suppose that \mbox{$G = A*_CB$}, that  $<_{\null_{\,\scriptstyle{A}}}$
is an  $A$-order  on $A/C$, and that \vspace{.5mm} $<_{\null_{\,\scriptstyle{B}}}$ is a  $B$-order
on~$B/C$.  The  Bass-Serre  left
$G$-tree $T$ for \mbox{$A \leftarrow C \rightarrow B$} has vertex-set $G/A \hskip2.5pt \dot{\cup} \hskip2.5pt G/B$
(where $\dot\cup$ denotes  the disjoint union) and edge-set $G/C$,
with  \mbox{$gA \xrightarrow{gC}{\mkern-9mu-}gB$}.
Then  $<_{\null_{\,\scriptstyle{A}}}$ and \vspace{.5mm}  $<_{\null_{\,\scriptstyle{B}}}$
determine a $G$-invariant local order on~$T$,
and we  have the associated   $G$-order $<_{\null_{\,\scriptstyle{T}}}$ on $\operatorname{V}T$,
as in Definitions~\ref{defs:trees} above.
Let us describe the   $G$-order $<_{\null_{\,\scriptstyle{T}}}$  on $G/A$.  Consider any $gA \in G/A$,
and write  \mbox{$gA = a_1b_1a_2b_2\cdots a_nb_nA$},  \mbox{$n \geq 0$}, where
 \mbox{$a_1 \in A$},   \mbox{$a_2,\ldots, a_n \in A{-}C$},  \mbox{$b_1,b_2,\ldots, b_n \in B{-}C$}.
 We then have a reduced $T$-path
\begin{align*}
 A  \xrightarrow{ a_1C }{\mkern-9mu-} \,  a_1B  \xrightarrow{ (a_1b_1C)^{-1} }{\mkern-9mu-} \,    a_1&b_1A
\xrightarrow{ a_1b_1a_2C }{\mkern-9mu-} \,  a_1b_1a_2B \xrightarrow{ (a_1b_1a_2b_2C)^{-1} }{\mkern-9mu-} \,  \ldots\\
& \hskip-59pt\ldots  \xrightarrow{ a_1b_1a_2b_2\cdots a_nC }{\mkern-9mu-} \, a_1b_1a_2b_2\cdots a_nB
 \xrightarrow{ (a_1b_1a_2b_2\cdots a_nb_nC)^{-1} }{\mkern-9mu-} \,  a_1b_1a_2b_2\cdots a_nb_nA = gA.
\end{align*}
The orientation-sum equals zero, and we have only the turn-sum, which simplifies  by the $G$-invari\-ance
of the local order to give\vspace{-2mm}
$$\textstyle\operatorname{sign}(A, <_{\null_{\,\scriptstyle{T}}}, gA) =
\operatorname{sign}  \Bigl(0,\,\, <_{\null_{\scriptstyle{\ZZ}}}, \,\,
 \sum\limits_{i=1}^n\operatorname{sign}(C, <_{\null_{\,\scriptstyle{B}}}, b_iC)
+ \sum\limits_{i=2}^{n}\operatorname{sign}(C ,<_{\null_{\,\scriptstyle{A}}}, a_iC)\Bigr).\vspace{-2mm}$$
\indent We record the case where $C=\{1\}$.
\end{example}

\begin{corollary}\label{cor:freefactors} In a  left-orderable group,
every free factor is left relatively convex.  \qed
\end{corollary}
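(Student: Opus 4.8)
The plan is to deduce Corollary~\ref{cor:freefactors} directly from Example~\ref{ex:A*B}, specialized to the case of a free product $G = A * B$ (that is, the amalgamated product $A *_C B$ with $C = \{1\}$). The point is that a free factor of a left-orderable group $G$ sits inside a decomposition $G = A * B$ with $A$ the given free factor, and the trivial subgroup $\{1\}$ is automatically left relatively convex in every group, so the hypotheses of Example~\ref{ex:A*B} are met for free.

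First I would observe that if $A$ is a free factor of $G$, then there is a subgroup $B \leq G$ with $G = A * B$ (by the definition of free factor). Next, $\{1\}$ is left relatively convex in $A$ and in $B$: the left $A$-set $A/\{1\} = A$ is $A$-orderable precisely when $A$ is left orderable, and any subgroup of a left-orderable group is left orderable, so $A$ and $B$ are both left orderable since $G$ is; hence $\{1\}$ is left relatively convex in each of $A$ and $B$ by condition~(\ref{def:relconvex}.\ref{a1}). Applying Example~\ref{ex:A*B} with $C = \{1\}$, we conclude that $A$ (and $B$) is left relatively convex in $G = A *_{\{1\}} B = A * B$. That is exactly the assertion.

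The explicit order is the one recorded at the end of Example~\ref{ex:A*B}: choosing a left order $<_{\null_{\,\scriptstyle A}}$ on $A$ and $<_{\null_{\,\scriptstyle B}}$ on $B$ (equivalently, the corresponding $A$-order on $A = A/\{1\}$ and $B$-order on $B = B/\{1\}$), one forms the Bass-Serre tree $T$ for $A \leftarrow \{1\} \rightarrow B$, transports these to a $G$-invariant local order on $T$, and takes the associated $G$-order on $\operatorname{V}T \supseteq G/A$, whose sign on the pair $(A, gA)$ is computed from the normal-form turn-sum displayed there. So there is essentially nothing left to prove: the corollary is the $C = \{1\}$ case already flagged in the text ("We record the case where $C = \{1\}$").

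Since the content is so slight, I do not anticipate a genuine obstacle. The only things to be careful about are purely formal: making sure that "free factor" is being used in the sense that $G$ literally splits as $A * B$ (not merely as a retract or a subgroup of such a splitting), and noting explicitly the elementary fact — used implicitly — that subgroups of left-orderable groups are left orderable, so that $B$ is left orderable and hence $G = A * B$ is left orderable by Theorem~\ref{th:fund}, which is consistent with the stated hypothesis. Given that, the proof is a one-line invocation of Example~\ref{ex:A*B}, and indeed the excerpt supplies the corollary with a bare \qed.
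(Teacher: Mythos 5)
Your proof is correct and is exactly the paper's intended argument: the corollary is recorded as the $C=\{1\}$ case of Example~\ref{ex:A*B}, using that the trivial subgroup is left relatively convex in each (necessarily left-orderable) free factor. Nothing further is needed.
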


\begin{example}\label{ex:ff} In a free group, every free factor is left relatively convex, by
Example~\ref{ex:trees} above.
\end{example}

\begin{example} Suppose that
 $A$ and $B$ are free groups, or, more generally, groups all of whose maximal  cyclic subgroups are
left relatively convex; see Corollary~\ref{cor:BH} above.
If $C$ is a maximal cyclic subgroup in both $A$ and $B$,
then $A$ and $B$  are left relatively convex in~$A*_CB$, by Example~\ref{ex:A*B} above.
\end{example}

\begin{example}\label{ex:HNN} Let  $A$ be a group, $C$ be a subgroup of $A$,
and \mbox{$x:C \to A$, $c \mapsto c^{\mkern3mu x},$}  be an injective homomorphism.
The graph of groups \mbox{$C \rightrightarrows A$}, where the maps
are the inclusion map and $x$, has as fundamental group
 \mbox{$A*_C\mkern1mu x:= A*\langle  x\mid \emptyset\rangle/\normgen{\{ x^{-1}{\cdot}c^{-1}{\cdot} x {\cdot}
c^{\mkern3mu x}
\mid c \in C\}}$},
 called the \textit{HNN extension}
 with \textit{vertex group}~$A$, \textit{edge group~$C$}, and \textit{edge map} $x$.
We  then view $A$ and \mbox{$\langle  x\mid \emptyset\rangle$} as  subgroups of  \mbox{$A*_C\mkern1mu x$}.  In particular, $c^{\mkern3mu x}
= x^{-1}cx$.

If $C$ and $C^{\mkern2mu x}$ are
left relatively convex in~$A$, then   $A$ is left relatively convex in $A*_C\mkern1mu x$,
 by Theorem~\ref{th:fund} above.

 If  \mbox{$G = A*_C\mkern1mu x$}, then the
 Bass-Serre  left $G$-tree $T$ for \mbox{$C \rightrightarrows A$} has vertex-set $G/A$ and edge-set $G/C$, with
 \mbox{$gA \xrightarrow{gC}{\mkern-9mu-}g x A$}.
\end{example}

\section{Surface groups and raags}\label{sec:examples}

 The following applies to all noncyclic surface groups.

\begin{example}\label{ex:HNN2}   Let \mbox{$G=\langle \mkern2mu  \{x\} \,\,\dot{\cup}\,\, \{y\}
   \,\,\dot{\cup}\,\,Z   \mid  x^{-1}y^{\mkern1mu \epsilon}xy  w \rangle$} with
\mbox{$\epsilon \in \{-1,1\}$} and
 \mbox{$w \in \langle \mkern2mu Z \mid \emptyset \,\rangle$}.
By Example~\ref{ex:ff} above, both\vspace{.5mm} $\langle\, y \,\rangle$ and $\langle\, y\,  w  \rangle$
are left relatively convex  in \vspace{.5mm}
$\langle \mkern2mu  \{y\} \,\,\dot{\cup}\,\, Z  \mid \emptyset \,\rangle$,
which in turn is left relatively convex in the HNN extension $G$,
by Example~\ref{ex:HNN} above. Here the Bass-Serre left $G$-tree $T$ has vertex-set
\mbox{$G/\langle \{y\}   \,\,{\cup} \,\, Z   \rangle$}
and edge-set \mbox{$G/\langle y \rangle$}.

 Notice that
\mbox{$\langle \mkern2mu  \{x\} \,\,{\cup} \,\, Z \, \rangle$} is not
left relatively convex in \mbox{$\langle \mkern2mu  \{x\} \,\,\dot{\cup}\,\, \{y\}
   \,\,\dot{\cup}\,\,Z   \mid  (xy)^2 =   x^{2}w^{-1}\rangle$}.

\end{example}

The following applies to all noncyclic surface groups except the Klein-bottle group.

\begin{proposition}   Let   \mbox{$G =   \langle \mkern2mu  \{x\}
 \,\,\dot{\cup}\,\, \{y\}    \,\,\dot{\cup}\,\,Z   \mid  [x,y]w \rangle$}  with
\mbox{$w \in \langle \mkern2mu Z \mid \emptyset \,\rangle$}.
  Then  every maximal  cyclic subgroup of $G$ is left relatively convex in~$G$.
\end{proposition}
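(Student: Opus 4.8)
The plan is to realize $G$ as the fundamental group of a suitable graph of groups and apply Corollary~\ref{cor:BH} together with Theorem~\ref{th:fund} (and the convexity transfer in Definitions~\ref{defs:chain}). Write $N := \langle\, \{y\}\,\dot\cup\, Z \mid \emptyset\,\rangle$, a free group. The relator $[x,y]w = x^{-1}y^{-1}xy\,w$ exhibits $G$ as the HNN extension $G = N *_{C} x$, where $C = \langle y \rangle$ and the edge map $x\colon C \to N$ sends $y \mapsto (yw)^{-1}\cdot y \cdot (yw)$? — more precisely, one reads off from $x^{-1}yx = y\cdot(yw)\cdot y^{-1}\cdot$... so I would first carefully pin down the stable-letter conjugation formula, getting $C^{x} = \langle\, {}^{w^{-1}}\!y\,\rangle$ or the appropriate maximal-cyclic conjugate; the point is simply that both $C = \langle y\rangle$ and $C^{x}$ are cyclic subgroups of the free group $N$, generated by primitive-after-passing-to-a-root elements. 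Since $N$ is free, every maximal cyclic subgroup of $N$ is left relatively convex by Corollary~\ref{cor:BH}; hence the maximal cyclic subgroups containing $C$ and $C^{x}$ are left relatively convex in $N$, and then so is $N$ itself left relatively convex in $G$ by Example~\ref{ex:HNN}.

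Next I would run the Burns–Hale criterion (Theorem~\ref{th:BH}) directly on $G$ with $G_0$ a given maximal cyclic subgroup. Let $X$ be a nonempty finite subset of $G - G_0$ and $H := \langle X \rangle$. If $H$ is cyclic there is nothing to check (a proper subgroup works trivially, or $H\cap G_0 = H$ is excluded since $X\not\subseteq G_0$, so in fact $H\cap G_0 \subsetneq H$ and $H\cap G_0$, being a subgroup of a cyclic group, is itself cyclic, and one appeals to the fact recorded in the Remark that $A\cap B$ is left relatively convex in $B$ when $A$ is in $G$). If $H$ is noncyclic, the crucial claim is that $H$ admits a proper left relatively convex subgroup containing $H\cap G_0$. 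Here I would split into the two Bass–Serre cases for the action of $H$ on the tree $T$ of Example~\ref{ex:HNN}: either $H$ is elliptic, i.e. $H$ lies in a conjugate of $N$, in which case $H$ is a noncyclic free group, $H\cap G_0$ is cyclic (as $G_0$ is cyclic), and a maximal cyclic subgroup of $H$ containing $H\cap G_0$ is proper and left relatively convex by Corollary~\ref{cor:BH}; or $H$ acts with unbounded orbits on $T$, and then the induced graph-of-groups decomposition of $H$ has a free, hence cyclic, edge group contained in a free vertex group, so Theorem~\ref{th:fund} (applied to the $H$-tree obtained by restricting the $G$-action) makes a vertex group $H_0 < H$ left relatively convex in $H$; to arrange $H\cap G_0 \leq H_0$ one uses that $G_0$ is cyclic, so $H\cap G_0$ is either trivial or a maximal-cyclic-after-adjustment subgroup and can be placed inside a vertex stabilizer or handled as an edge group.

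The main obstacle — and the step I would expect to require the most care — is the case analysis controlling how a finitely generated noncyclic $H \leq G$ meets $G_0$ and how it sits relative to the Bass–Serre tree, specifically ensuring in every case that $H\cap G_0$ is contained in a *proper* left relatively convex subgroup of $H$. The delicate sub-point is when $H$ is noncyclic elliptic but $H\cap G_0$ is a "large" cyclic subgroup of $H$: one must still find a proper left relatively convex subgroup of the free group $H$ containing the given cyclic subgroup, which is exactly Corollary~\ref{cor:BH} provided we pick a maximal cyclic subgroup of $H$ above $H\cap G_0$ — and that maximal cyclic subgroup is automatically proper in the noncyclic free group $H$. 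The Klein-bottle exclusion enters precisely because when $\epsilon = -1$ and $w=1$ (the genuine Klein-bottle presentation of Example~\ref{ex:HNN2}) the group $\langle x\rangle$ fails to be left relatively convex, whereas for the commutator relator $[x,y]w$ no such obstruction arises; I would remark on this to explain why the hypothesis $[x,y]w$ (rather than $x^{-1}y^{\epsilon}xy\,w$ with $\epsilon=-1$) is what is needed, citing the final display of Example~\ref{ex:HNN2}.
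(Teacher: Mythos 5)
Your overall strategy---run Theorem~\ref{th:BH} directly on a maximal cyclic $G_0$, using the Bass--Serre tree of the HNN splitting $G=N*_C x$ with $N=\langle \{y\}\,\dot\cup\,Z\rangle$ and $C=\langle y\rangle$, $C^x=\langle yw\rangle$---is not the paper's route, and it has two genuine gaps. First, the cyclic case: from $X\not\subseteq G_0$ you only get $H\cap G_0\subsetneq H$, and a \emph{proper} cyclic subgroup of an infinite cyclic group need not be left relatively convex ($2\ZZ$ is not left relatively convex in $\ZZ$, since $\ZZ/2\ZZ$ carries no invariant order); moreover your appeal to the fact from the Remark (that $A\cap B$ is left relatively convex in $B$ when $A$ is so in $G$) presupposes that $G_0$ is already left relatively convex in $G$, which is the statement being proved. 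What one actually needs here is that $H\cap G_0=\{1\}$ whenever $H$ is cyclic and generated by elements outside $G_0$, i.e.\ that maximal cyclic subgroups of $G$ are isolated---true, but requiring an argument you do not supply. Second, and more seriously, the non-elliptic case: if $H$ acts on $T$ with unbounded orbits and $H\cap G_0$ is generated by a \emph{hyperbolic} element (this happens, e.g., for $G_0=\langle x\rangle$ or any maximal cyclic subgroup whose generator has nonzero exponent sum in $x$), then $H\cap G_0$ lies in no vertex stabilizer and no edge stabilizer of the induced splitting, so the subgroup $H_0$ you propose (``placed inside a vertex stabilizer or handled as an edge group'') simply does not exist. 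This is precisely the hard case, and your sketch offers no mechanism for it.

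The paper avoids the whole case analysis by a global argument: it shows $G$ is locally $2$-indicable and then quotes Corollary~\ref{cor:BH}. Concretely, it suffices to show that every nonfree subgroup $H$ has image of rank $2$ in $\bar G:=G/\normgen{Z}\simeq\ZZ^2$, equivalently that $\bar H$ meets $\langle \bar x^{\,n}\bar y\rangle$ nontrivially for every $n$; this is done by applying the automorphism $\varphi$ with $\varphi(y)=x^{-n}y$ and observing, via the tree of Example~\ref{ex:HNN2} (whose vertex and edge stabilizers are free), that the nonfree group $\varphi(H)$ must meet some conjugate of $\langle y\rangle$ nontrivially. If you want to salvage your approach, the cleanest repair is to prove this same surjection-onto-$\ZZ^2$ statement for every noncyclic finitely generated $H$ and then take $H_0$ to be the kernel of an induced surjection $H\to\ZZ$ chosen to kill $H\cap G_0$---but at that point you have reconstructed the paper's proof. (Your closing remark about the Klein bottle is also misplaced: the hypothesis $[x,y]w$ already fixes $\epsilon=1$, and the exclusion concerns which surface groups the proposition covers, not a step inside this proof.)
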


\begin{proof}
Set  \mbox{$\bar G :=  G/\normgen{Z} \simeq \ZZ^2,$}  and let
 $G \to \bar G$, $g \mapsto \bar g$, denote the natural map.
We shall show that if $H$ is a  nonfree subgroup of $G$, then $\bar H \simeq \ZZ^2$, and
the result will then follow from Corollary~\ref{cor:BH} above.
Let $n \in \ZZ$.  It suffices to show that \mbox{$\bar H \cap \langle \bar x^{n} \bar y \rangle \ne \{1\}$}.
Let $\varphi$ denote the  automorphism of~$G$ such that  $\varphi(y) = x^{-n}y$ and $\varphi(z) = z$ for all $z \in \{x\} \cup Z$.
  By Example~\ref{ex:HNN2} above,  we have a left $G$-tree $T$
with free $G$-stabilizers  and edge-set~\mbox{$G/\langle y  \rangle$}.
By Bass-Serre theory, the nonfreeness of $\varphi(H)$ implies that the $\varphi(H)$-stabilizer
 of some $T$-edge $g_0\langle y \rangle$ must be nontrivial. Then
 \mbox{$\varphi(H) \cap \null^{g_0}\mkern-2mu\langle y \rangle \ne \{1\}$}.
Now   \mbox{$ H  \cap
\null^{\varphi^{-1}(g_0)}\mkern-2mu\langle x^ny \rangle
 = \varphi^{-1}(\varphi(H) \cap \null^{g_0}\mkern-2mu\langle y \rangle)\ne \{1\}$}.  It follows that
\mbox{$\bar H \cap \langle \bar x^{n} \bar y \rangle \ne \{1\}$}.
\end{proof}

\begin{corollary} In any surface group that is not the Klein-bottle group,  maximal
 cyclic subgroups are left relatively convex.\qed
\end{corollary}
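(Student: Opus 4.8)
The plan is to reduce the corollary to the preceding Proposition by exhibiting, for each closed-surface group $\pi_1(S)$, a presentation of the form $\langle\{x\}\dot{\cup}\{y\}\dot{\cup}Z\mid[x,y]w\rangle$ with $w\in\langle Z\mid\emptyset\rangle$ — except when $S$ is the sphere, the projective plane, or the Klein bottle. The sphere and the projective plane give the trivial group and $\ZZ/2\ZZ$; in each of these the only maximal cyclic subgroup is the whole group, and a group is always left relatively convex in itself (the one-point left $G$-set $G/G$ carries the vacuously trichotomous order), so these two cases need no argument, and the Klein-bottle group is the announced exception. (If $S$ has nonempty boundary or is noncompact, then $\pi_1(S)$ is free and the conclusion is already Corollary~\ref{cor:BH}.)

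For the orientable case: if $S$ is the closed orientable surface of genus $g\geq1$, then $\pi_1(S)=\langle a_1,b_1,\dots,a_g,b_g\mid\textstyle\prod_{i=1}^{g}[a_i,b_i]\rangle$. Setting $x=a_1$, $y=b_1$, $Z=\{a_i,b_i:2\leq i\leq g\}$, and $w=\prod_{i=2}^{g}[a_i,b_i]$ — which lies in $\langle Z\mid\emptyset\rangle$, and is the empty word when $g=1$ — puts this presentation into the shape required by the Proposition, so every maximal cyclic subgroup of $\pi_1(S)$ is left relatively convex in $\pi_1(S)$.

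For the non-orientable case: write $N_k$ for the connected sum of $k$ copies of the projective plane, so $N_1$ is the projective plane and $N_2$ is the Klein bottle. For $k\geq3$, Dyck's theorem gives $N_k\cong T^2\#N_{k-2}$, and consequently $\pi_1(N_k)$ has a presentation $\langle x,y,c_1,\dots,c_{k-2}\mid[x,y]\,c_1^2 c_2^2\cdots c_{k-2}^2\rangle$, obtained (up to an obvious automorphism of the free subgroup on the $c_j$, which leaves the commutator relator untouched) by concatenating the relator $[x,y]$ of the torus with the standard relator $c_1^2\cdots c_{k-2}^2$ of $N_{k-2}$. Taking $Z=\{c_1,\dots,c_{k-2}\}$ and $w=c_1^2\cdots c_{k-2}^2\in\langle Z\mid\emptyset\rangle$, the Proposition again applies. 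The only non-orientable surfaces left out are $N_1$ (already handled) and $N_2$, the Klein bottle, which is genuinely excluded: for $k=2$ the decomposition $T^2\#N_{k-2}$ would yield $N_0=T^2$ and the false conclusion $\pi_1\cong\ZZ^2$, and the obstruction is precisely the one flagged after Example~\ref{ex:HNN2}.

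The mathematical content is carried entirely by the Proposition and by the classification of closed surfaces; the only step needing care is the non-orientable case, where one must correctly invoke Dyck's theorem to express $\pi_1(N_k)$, $k\geq3$, with a relator of the form $[x,y]w$, $w$ a word in generators disjoint from $x$ and $y$. Once each presentation is brought into the shape of the Proposition, there is nothing further to prove.
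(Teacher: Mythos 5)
Your proposal is correct and follows exactly the route the paper intends: the corollary is stated with \qed because the authors regard it as immediate from the Proposition together with the standard fact that every closed-surface group other than those of the sphere, projective plane, and Klein bottle has a one-relator presentation with relator $[x,y]w$, $w$ a word in the remaining generators (via Dyck's theorem in the non-orientable case). Your explicit treatment of the degenerate cyclic cases and of the presentations is just a fleshed-out version of what the paper leaves unsaid.
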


\begin{definitions}\label{defs:raag}  Let $X$ be a set,  $R$ be a subset of  $[X,X]$  in
$\langle \mkern2mu X \mid \emptyset \,\rangle$, and \mbox{$G = \langle \mkern2mu X \mid R\,\rangle$}.
  We say that $G$ is
a \textit{right-angled Artin group},
or   \textit{raag} for short.  For example,   free groups and   free abelian groups are raags.

Let $Y$ be a subset of $X$.  The map \mbox{$X \to G$}
which acts as the identity map on $Y$ and sends \mbox{$X{-}Y$} to $\{1\}$ induces  well-defined
homomorphisms   \mbox{$G  \to G$} and \mbox{$G/\normgen{\mkern2muX{-}Y} \to G$}.
Moreover, the natural
composite \mbox{$G/\normgen{\mkern2muX{-}Y} \,\,\,\to\,\,\,G \,\,\,\to\,\,\, G/\normgen{\mkern2muX{-}Y}$}
 is the identity map, since it acts as such on
the generating set $Y$.
   Thus
we may identify  \mbox{$G/\normgen{\mkern2muX{-}Y}$}  with its image
$\langle Y \rangle$ in~$G$.  It follows that $\langle Y\rangle$ is a
raag.   We let $\pi_{\langle X \rangle \to \langle Y\rangle}$ denote
the  map  \mbox{$G \,\, \to \,\, G/\normgen{\mkern2muX{-}Y} = \langle Y \rangle$}.

For each \mbox{$x \in X$},     \mbox{$G = A\ast_Cx$} where
\mbox{$A = \langle X{-}\{x\} \rangle$},   \mbox{$C = \langle \{y \in X{-}\{x\} \mid [x,y] \in R^{\pm1} \}\rangle$}, and
\mbox{$x:C \to A$}, \mbox{$c\mapsto c^{\mkern2mux}$}, is the inclusion map.  In essence, this was noted by Bergman~\cite{Berg76}.

It is not difficult to show that $\langle\,  Y \,\rangle$ is left relatively
convex in $G$; since~(\ref{def:relconvex}.\ref{d1}) above
is a local condition, it suffices to verify this for $X$ finite,  and here it holds by induction on $|X|$ and
Example~\ref{ex:HNN} above.  In particular, $G$ is left orderable and, hence, torsion-free.
\end{definitions}

By \cite[Corollary 1.6]{AM15}, raags are {\nasmof} and therefore locally $2$-indicable. 
We do not know if raags are locally $n$-indicable for all positive integers $n$ or if they have the property that their maximal $n$-generated subgroups are left relatively convex. By Corollary~\ref{cor:nasmof}, we have  the following.

\begin{corollary}
Let $G$ be a subgroup a right-angled Artin group and $n$ a non-negative integer. Every maximal $n$-generated abelian subgroup of $G$ is left relatively convex in $G$. 
\end{corollary}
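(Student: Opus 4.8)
The plan is to obtain this as an immediate consequence of Corollary~\ref{cor:nasmof}, once we observe that every subgroup of a right-angled Artin group is \nasmof. First I would invoke \cite[Corollary~1.6]{AM15}, recorded just above the statement, to see that every right-angled Artin group is a \nasmof\ group. Next I would use the fact that the class of \nasmof\ groups is closed under passage to subgroups; this is one of the closure properties listed in the Example on \nasmof\ groups above, and it is in any case transparent from the definition, since a finitely generated nonabelian subgroup of a subgroup of a group $\Gamma$ is itself a finitely generated nonabelian subgroup of $\Gamma$ and hence maps onto $\ZZ*\ZZ$, while torsion-freeness obviously passes to subgroups. Thus, if $\Gamma$ is a right-angled Artin group and $G$ is a subgroup of $\Gamma$, then $G$ is \nasmof.

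Then I would simply apply Corollary~\ref{cor:nasmof} to the \nasmof\ group $G$: for each non-negative integer $n$, every maximal $n$-generated abelian subgroup of $G$ is left relatively convex in $G$, which is exactly the assertion. Since all the work is already done in \cite{AM15} and in Corollary~\ref{cor:nasmof} (which in turn rests on Theorem~\ref{th:BH}), there is no genuine obstacle here; the statement is essentially a repackaging. The only point worth flagging is that the hypothesis ``$G$ is a subgroup of a raag'' is used purely through the \nasmof\ property, so one could equally state the result for all subgroups of \nasmof\ groups — but, as the class of \nasmof\ groups is subgroup-closed, that is already covered by Corollary~\ref{cor:nasmof} as stated.
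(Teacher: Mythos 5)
Your proof is correct and is exactly the route the paper intends: the corollary is stated immediately after the observation that raags are \nasmof\ (by \cite[Corollary~1.6]{AM15}), and since the class of \nasmof\ groups is subgroup-closed (as noted in the paper's example and as you correctly verify from the definition), Corollary~\ref{cor:nasmof} applies directly to any subgroup $G$ of a raag. No gaps.
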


\section{Residually torsion-free nilpotent groups and left relative convexity}\label{sec:rtfn}

Corollary~\ref{cor:BH}, combined with the next few observations, provides many examples of left
relatively convex cyclic subgroups.

\begin{proposition}\label{p:nilpotent-2-indicable}
If $G$ is a finitely generated, nilpotent group with torsion-free center, then $G$ is 2-indicable.
\end{proposition}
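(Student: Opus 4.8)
The plan is to show that a finitely generated nilpotent group $G$ with torsion-free center either is cyclic or surjects onto $\ZZ^2$; by the definition of $2$-indicable this is exactly what is required. The natural invariant to track is the torsion-free rank of the abelianization, $r := \operatorname{rank}_{\ZZ}(G^{\mathrm{ab}}/\mathrm{torsion})$, equivalently the number of copies of $\ZZ$ in the free part of $G^{\mathrm{ab}}$. If $r \geq 2$, then $G^{\mathrm{ab}}$ surjects onto $\ZZ^2$, hence so does $G$, and we are done. So the whole content is the case $r \leq 1$: I would show that then $G$ is cyclic. (The case $r=0$ cannot actually occur for nontrivial $G$: a nontrivial finitely generated nilpotent group has nontrivial abelianization of positive rank, since its lower central quotients are nontrivial and the first one is $G^{\mathrm{ab}}$; but one does not even need this, as $r=0$ forces $G$ trivial by the argument below, which is cyclic anyway.)

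The key step is: if $G^{\mathrm{ab}}$ has torsion-free rank $\leq 1$, then $G$ is abelian. Here is where the torsion-free center hypothesis does the work. Recall that in a finitely generated nilpotent group, if $G^{\mathrm{ab}}$ is finite then $G$ is finite (all lower central quotients are finitely generated, and each is a quotient of a tensor power of $G^{\mathrm{ab}}$, hence finite); a finite nilpotent group with torsion-free center must be trivial, hence cyclic. So we may assume $r = 1$. Now I would argue by induction on the nilpotency class $c$ of $G$. If $c \leq 1$, $G$ is abelian and we are done. If $c \geq 2$, look at the last nontrivial term $\gamma_c(G)$ of the lower central series; it is central, nontrivial, and finitely generated abelian. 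Since the center $Z(G)$ is torsion-free, $\gamma_c(G)$ is a nontrivial free abelian group, so it surjects onto $\ZZ$, and it sits inside $[G,G]$. The commutator pairing $G^{\mathrm{ab}} \times \gamma_{c-1}(G)^{\mathrm{ab}\text{-part}} \to \gamma_c(G)$ (more precisely the map induced by $(g,h)\mapsto [g,h]$ from $G/\gamma_2 \otimes \gamma_{c-1}/\gamma_c \to \gamma_c/\gamma_{c+1}=\gamma_c$) must be nonzero since $\gamma_c(G)\neq 1$; but a nonzero bilinear map into a group of rank $\geq 1$ whose source has a rank-$1$ factor forces... — this is the delicate point, so let me instead use the cleaner route via the center directly.

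The cleaner route, and the one I would actually write: consider $G/Z(G)$. This is again finitely generated nilpotent, of strictly smaller class, and one checks its center is torsion-free — wait, that is not automatic, so I avoid it. Instead: let $A = Z(G)$, which is free abelian of some rank $s \geq 1$ (nontrivial because $G$ is nilpotent nontrivial, torsion-free by hypothesis). The quotient $G/[G,G]$ surjects $G/([G,G]A)$ which is a quotient of $G^{\mathrm{ab}}$, so has rank $\leq 1$. Now suppose for contradiction $G$ is nonabelian; pick $g, h$ with $[g,h]\neq 1$. Then $[g,h]\in [G,G]\cap$ (something central eventually); choosing $h$ so that $[g,h]$ is a nontrivial element of the center $A$ (possible by descending the lower central series: the last nontrivial commutator is central), we get $\langle [g,h]\rangle \cong \ZZ$. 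Consider the subgroup $\langle g, h, A\rangle$; its image in $G^{\mathrm{ab}}$ together with the independent central direction $\langle [g,h]\rangle$ shows the torsion-free rank of $G^{\mathrm{ab}}$ — here I need the Bass–Mal'cev–style fact that in a torsion-free f.g. nilpotent group the Hirsch length is additive and $\operatorname{rank}(G^{\mathrm{ab}}) \geq \operatorname{rank}(G^{\mathrm{ab}}/\overline{Z(G)}) + \big(\text{contribution forcing at least }2\big)$. I expect the main obstacle to be exactly this bookkeeping: making rigorous that a torsion-free f.g. nilpotent group with cyclic abelianization must itself be cyclic. The honest way is: $G^{\mathrm{ab}}\cong\ZZ$ means $G = \langle t\rangle [G,G]$ with $[G,G]$ the intersection of lower central terms; then $[G,G]/\gamma_3(G)$ is generated by commutators $[t,[G,G]] = 1$ in that quotient (since $t$ and $[G,G]$ commute modulo $\gamma_3$), forcing $[G,G]=\gamma_3(G)=\cdots=\gamma_c(G)$, whence $[G,G]$ is finitely generated, central, and equals its own "next commutator" — by nilpotency this means $[G,G]=1$. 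So $G$ is abelian, torsion-free, f.g., of rank $\leq 1$, hence trivial or $\ZZ$: cyclic. Combining the two cases ($r\geq 2$: maps onto $\ZZ^2$; $r\leq 1$: cyclic) gives that $G$ is $2$-indicable, completing the proof.
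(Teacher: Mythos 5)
Your overall strategy (split on the torsion-free rank $r$ of $G^{\mathrm{ab}}$: if $r\geq 2$ then $G$ maps onto $\ZZ^{2}$; if $r\leq 1$ show $G$ is cyclic) is sound in outline, but the crucial case $r=1$ has a genuine gap. In your final ``honest way'' you silently replace the hypothesis ``$G^{\mathrm{ab}}$ has torsion-free rank $1$'' by the stronger hypothesis ``$G^{\mathrm{ab}}\cong\ZZ$''. These are not the same: $r=1$ only gives $G^{\mathrm{ab}}\cong\ZZ\oplus T$ with $T$ finite, so $G=\langle t,s_{1},\dots,s_{k}\rangle\,[G,G]$ with the $s_{i}$ of finite order modulo $[G,G]$, and then $[G,G]/\gamma_{3}(G)$ is generated by the classes of $[t,s_{i}]$ and $[s_{i},s_{j}]$, which your argument does not kill. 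This cannot be waved away: torsion in the abelianization of a torsion-free finitely generated nilpotent group with torsion-free center really occurs, e.g.\ $\langle\, x,y,t \mid [x,y]=t^{p}\,\rangle$ with $t$ central has center $\langle t\rangle\cong\ZZ$ and $G^{\mathrm{ab}}\cong\ZZ^{2}\oplus\ZZ/p$. The middle of your proposal (the commutator-pairing and Hirsch-length attempts) is explicitly left unfinished, so it does not close this hole either; and the parenthetical claim that a nontrivial finitely generated nilpotent group has abelianization of positive rank is false (take $\ZZ/4$), though you correctly note it is not needed.

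The gap is repairable along your own lines, but it needs two inputs you never state. First, every nontrivial normal subgroup of a nilpotent group meets the center nontrivially, so the torsion-free-center hypothesis forces all of $G$ to be torsion-free, and more to the point forces every finite normal subgroup to be trivial. Second, $\gamma_{2}(G)/\gamma_{3}(G)$ is a quotient of $\bigwedge^{2}G^{\mathrm{ab}}$ and, inductively, $\gamma_{i}(G)/\gamma_{i+1}(G)$ is a quotient of $G^{\mathrm{ab}}\otimes\bigl(\gamma_{i-1}(G)/\gamma_{i}(G)\bigr)$; when $r\leq 1$ all of these are finite, so $[G,G]$ is finite, hence trivial by the first point, hence $G=G^{\mathrm{ab}}$ is torsion-free abelian of rank $\leq 1$, i.e.\ cyclic. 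For comparison, the paper's proof avoids all of this bookkeeping by inducting on the nilpotency class through the upper central series: it shows $Z_{2}/Z_{1}$ embeds in a power of $Z_{1}$, so $G/Z_{1}$ again has torsion-free center; the inductive hypothesis applies to $G/Z_{1}$, and the cyclic case collapses because $G/Z(G)$ cyclic forces $G$ abelian. That route never mentions ranks of abelianizations and is the cleaner one to write.
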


\begin{proof}
Let $G$ be any group (not necessarily nilpotent or with torsion free center), $Z_1$ be its center,
and $Z_2$ be its second center, that is, $Z_2/Z_1$ is the center of $G/Z_1$.

For $g \in G$ and $a \in Z_2$, the commutator $[a,g]$ is in $Z_1$. From the identity
$[ab,g]=[a,g]^b[b,g]$, we obtain, for $a,b \in Z_2$, $[ab,g]=[a,g][b,g]$. Therefore, for any
element $g \in G$, $a \mapsto [a,g]$ is a homomorphism from $Z_2$ to $Z_1$, and $a \mapsto
([a,g])_{g \in G}$ is a homomorphism from $Z_2$ to $\prod_{g \in G} Z_1$ with kernel $Z_1$, which
implies that $Z_2/Z_1$ embeds into a power of $Z_1$.

We now let $G$ be a finitely generated, nilpotent group with torsion free center and we argue by
induction on the nilpotency class $c$ of G.

If $c = 0$, then $G$ is trivial, and hence 2-indicable. Assume that $c \ge 1$. Since $Z_2/Z_1$
embeds into a power of $Z_1$, which is a torsion-free group, $Z_2/Z_1$ itself is a torsion-free
group. Therefore $G/Z_1$ is a finitely generated, nilpotent group of class $c-1$ with torsion-free
center $Z_2/Z_1$. By the inductive hypothesis, $G/Z_1$ is 2-indicable. If $G/Z_1$ is noncyclic,
then $G/Z_1$ maps onto $\ZZ^2$, and so does $G$; thus we may assume that $G/Z_1$ is cyclic. In that
case, $G/Z_1$ is trivial, $G$ is a free abelian group, and, hence, $G$ is 2-indicable.
\end{proof}

\begin{remark}
Note that, under the assumption that $Z_1$ is torsion free, the observation that $Z_2/Z_1$ embeds
into some power of $Z_1$ yields that $Z_2/Z_1$, the center of $G/Z_1$, is itself torsion-free.
Inductive arguments then quickly yield that each upper central series factor $Z_{i+1}/Z_i$, for $i
\geq 0$, is torsion-free, each quotient $Z_j/Z_i$, for $j>i\geq0$, is torsion free, and under the
additional assumption that $G$ is nilpotent, each quotient $G/Z_i$, for $i \geq 0$, is
torsion-free; these are well-known results of Mal{\cprime}cev~\cite{malcev:tfn} and we could use
them to skip the first part in the proof of Proposition~\ref{p:nilpotent-2-indicable} and move
directly to the inductive part of the proof.

Proposition~\ref{p:nilpotent-2-indicable} also follows from Mal{\cprime}cev's result on quotients,
together with Lemma~13 in~\cite{bridson-b-e-s:ggrowth}, which states that every finitely generated,
nilpotent group that is not virtually cyclic maps onto $\ZZ^2$ (the proof of this result relies on
the fact that torsion-free, virtually abelian, nilpotent groups are abelian, which easily follows
from the uniqueness of roots in torsion-free nilpotent groups; another result of Mal{\cprime}cev
from~\cite{malcev:tfn}).

With all these choices before us, we still opted for the proof of
Proposition~\ref{p:nilpotent-2-indicable} provided above, because it is short and self-contained.
\end{remark}

\begin{proposition}\label{p:lrtfn}
Every locally residually torsion-free nilpotent group is locally 2-indicable.
\end{proposition}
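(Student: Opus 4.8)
The plan is to reduce the statement to Proposition~\ref{p:nilpotent-2-indicable} via a direct ``locality'' argument. Let $G$ be locally residually torsion-free nilpotent, and let $H$ be a finitely generated subgroup of $G$; I must show $H$ is $2$-indicable, i.e.\ $H$ is cyclic or maps onto $\ZZ^2$. By hypothesis $H$ is residually torsion-free nilpotent, so if $H$ is noncyclic I need to produce a surjection $H \twoheadrightarrow \ZZ^2$.

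First I would dispose of the case where $H$ is abelian: a finitely generated torsion-free abelian group of rank $\geq 2$ maps onto $\ZZ^2$, and a residually torsion-free nilpotent group is in particular torsion-free, so a noncyclic finitely generated abelian such group has rank $\geq 2$. So assume $H$ is nonabelian. Since $H$ is residually torsion-free nilpotent, there is a normal subgroup $N \trianglelefteq H$ with $H/N$ torsion-free nilpotent and (crucially) $H/N$ nonabelian — pick $N$ avoiding a single nontrivial commutator $[a,b]$ with $a,b \in H$, which is possible because the torsion-free nilpotent quotients separate points. Being a finitely generated torsion-free nilpotent group, $H/N$ has torsion-free center (this is one of the Mal{\cprime}cev facts recalled in the remark after Proposition~\ref{p:nilpotent-2-indicable}, and in any case is standard). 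Then Proposition~\ref{p:nilpotent-2-indicable} applies to $H/N$: it is $2$-indicable. But $H/N$ is nonabelian, hence noncyclic, so $H/N$ maps onto $\ZZ^2$; composing with $H \twoheadrightarrow H/N$ gives $H \twoheadrightarrow \ZZ^2$. Thus $H$ is $2$-indicable, and since $H$ was an arbitrary finitely generated subgroup, $G$ is locally $2$-indicable.

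I expect the only real point requiring care to be the choice of $N$: I need a torsion-free nilpotent quotient of $H$ that remains \emph{nonabelian}, not merely nontrivial. This follows because residual torsion-free nilpotence means the intersection of all such $N$ is trivial, so some $N$ misses a prescribed nontrivial element — and $[a,b] \neq 1$ for suitable $a,b$ precisely when $H$ is nonabelian, so that element witnesses nonabelianness in $H/N$. The secondary point is knowing that finitely generated torsion-free nilpotent groups have torsion-free center, which is exactly the input required by Proposition~\ref{p:nilpotent-2-indicable}; this is classical (Mal{\cprime}cev), or can be seen directly since the center of such a group is a finitely generated torsion-free abelian group. Everything else is bookkeeping: residual torsion-free nilpotence passes to finitely generated subgroups by the ``locally'' hypothesis, and $2$-indicability is by definition a condition on finitely generated subgroups, so no further localization is needed.
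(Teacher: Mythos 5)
Your proof is correct and follows essentially the same route as the paper's: reduce to Proposition~\ref{p:nilpotent-2-indicable} applied to a noncyclic, torsion-free, nilpotent quotient of $H$, and handle the remaining case by observing that $H$ is then free abelian of rank at most one. The only (immaterial) difference is that you split on abelian versus nonabelian and explicitly separate a commutator to obtain a nonabelian quotient, whereas the paper splits on whether a noncyclic torsion-free nilpotent quotient exists and, if not, deduces that $H$ is residually-$\ZZ$ and hence abelian.
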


\begin{proof}
Let $G$ be a locally residually torsion-free nilpotent group and $H$ a finitely generated subgroup
of $G$. Then $H$ is residually torsion-free nilpotent group. If $H$ has a noncyclic, torsion-free,
nilpotent quotient, then this quotient maps to $\ZZ^2$ by
Proposition~\ref{p:nilpotent-2-indicable}, and so does $H$. Otherwise, $H$ is residually-$\ZZ$,
which implies that it is abelian. Since $H$ is finitely generated and torsion-free, it is free
abelian, hence 2-indicable (in fact, $H$ is cyclic in this case, since we already excluded the
possibility of noncyclic quotients).
\end{proof}

\begin{remark}
Note that if $G$ is residually torsion-free nilpotent then it is also locally residually
torsion-free nilpotent. In particular, for finitely generated groups there is no difference between
being residually torsion-free nilpotent or being locally residually torsion-free nilpotent.
\end{remark}

\begin{example}\label{e:rtfn-examples}
If $G$ is a
\begin{itemize}
\item residually free group~\cite{magnus:free-embedding},
\item right-angled Artin group or a subgroup of a right-angled Artin group~\cite{droms:phd},
\item 1-relator group with presentation
\[
 \langle \ X,a,b \mid [a,b] = w \ \rangle,
\]
where $a,b \not \in X$ and $w$ is a group word over $X$, including fundamental groups of all
    compact surfaces other than the sphere, the projective plane, and the Klein
    bottle~\cite{baumslag:surface-groups,frederick:surface-groups,baumslag:nilpotent-reflections},
\item free group in any polynilpotent variety, including free solvable groups of any given
    class~\cite{gruenberg:residual}, or
\item pure braid group~\cite{falk-r:pure-residual},
\end{itemize}
then $G$ is a residually torsion-free nilpotent group.

By Proposition~\ref{p:lrtfn}, such a group $G$ is locally 2-indicable and, by
Corollary~\ref{cor:BH}, each maximal cyclic subgroup of $G$ is left relatively convex.
\end{example}

\end{document}